\newtheorem{thm}{Theorem}
\newtheorem{propo}{Proposition}
\newtheorem{lema}{Lemma}
\newtheorem{coro}{Corollary}
\newtheorem{defi}{Definition}
\newtheorem{rmk}{Remark}
\newtheorem{ejem}{Example}
\newcounter{letapar}
\begin{document}
\bibliographystyle{plain}
\begin{frontmatter}
\title{Almost strictly sign regular rectangular matrices}



\author[1]{P. Alonso}
\ead{palonso@uniovi.es}
\author[2]{J.M. Pe\~na}
\ead{jmpena@unizar.es}
\author[1]{M.L. Serrano}
\ead{mlserrano@uniovi.es}

\address[1]{Departamento de Matem\'aticas, Universidad de Oviedo, Spain}
\address[2]{Departamento de Matem\'atica Aplicada, Universidad de Zaragoza, Spain}

\begin{abstract}
Almost strictly sign regular matrices are sign regular matrices with a special zero pattern and whose nontrivial minors are nonzero. In this paper we provide several properties of almost strictly sign regular rectangular matrices of maximal rank and analyze their $QR$ factorization.
\end{abstract}

\begin{keyword}
almost strictly sign regular rectangular matrices \sep $QR$ factorization
\MSC 65F05 \sep 65F15 \sep 65F35
\end{keyword}
\end{frontmatter}

\section{Introduction}

Given a matrix $A$, the variation diminishing property guarantees that the number of variations of sign in the consecutive components of the image vector $Ax$ is bounded above by the number of variations of sign in the consecutive components of $x$. This property characterizes sign regular matrices among matrices of maximal rank (see Corollary 5.4 of \cite{Ando1987}) and has many important applications in several fields such as Statistics, Approximation Theory, Mechanics, Economy or Computer Aided Geometric Design. An important subclass of sign regular matrices is the class of almost strictly sign regular matrices, which present a special zero pattern and whose nontrivial minors are nonzero. Some properties of square almost strictly sign regular matrices have been studied (see, for instance, {\cite{Adm2016,Alonso2017,Alonso2015,RongHuang2012}}).

In this paper we extend the concept of almost strictly sign regular matrices to the rectangular case of maximal rank and we analyze several properties. A motivation for this study comes from the fact that least squares problems lead to coefficient matrices of {overdetermined} systems. If those coefficient matrices have maximal rank, then the $QR$ factorization is a key tool to solve these problems. In this paper we provide several characterizations of almost strictly sign regular matrices and analyze their $QR$ factorization.

We now present the layout of the paper. Section 2 includes some basic notations and some definitions involving the zero pattern of a matrix. Section 3 introduces the main classes of matrices consider in this paper and presents several characterizations of almost strictly sign regular matrices with maximal rank. Finally, the analysis of the $QR$ factorization of  almost strictly sign regular matrices of maximal rank is performed in Section 4.

\section{Basic {notation}}

First, we will introduce some concepts and notations that will be used throughout the document.

For $k, n \in \mathbb{N}$, with $1 \leq k \leq n$, $Q_{k,n}$ denotes the set of all increasing sequences of $k$ natural numbers not greater than $n$. So, given $m,n,k \in \mathbb{N}$ fulfilling that $k \leq \min \{m , n \}$, the elements $\alpha=(\alpha_{1}, \dots, \alpha_{k})\in Q_{k,m}$ and  $\beta=(\beta_{1}, \dots, \beta_{k})\in Q_{k,n}$, if $A$ is a real $m \times n$ matrix, we denote by $A[\alpha|\beta]$ the $k \times k$ submatrix of $A$ containing rows $\alpha_{1}, \dots, \alpha_{k}$ and columns $\beta_{1}, \dots, \beta_{k}$ of $A$. If $\alpha=\beta$, we denote by $A[\alpha]:=A[\alpha|\alpha]$ the corresponding principal submatrix. Besides, $Q^{0}_{k,n}$ denotes the set of increasing sequences of $k$ consecutive natural numbers less than or equal to $n$.

For a sequence $\alpha \in Q_{k,m}$, we denote
$$ d(\alpha)=\sum_{i=1}^{k-1}(\alpha_{i+1}-\alpha_{i}-1)=\alpha_{k}-\alpha_{1}-(k-1)$$
with the convention $d(\alpha)=0$ if $k=1$. Note that if $d(\alpha)=0$ then $\alpha \in Q^{0}_{k,n}$.

Now, type-I (type-II) staircase matrices and some results related {to} them are presented.

\begin{defi}\label{tipoI}
Let $A=\left(a_{ij}\right)_{1 \leq i \leq m, 1 \leq j \leq n}$ be an $m\times n$ real matrix. $A$ is called type-I staircase if it {simultaneously} satisfies  the following conditions:
\begin{eqnarray} \label{diagtipoI}
\hbox{if} \quad i=j & \Rightarrow & a_{ii}\not=0 , \\ \label{abajotipoI}
\hbox{if} \quad i>j \quad \hbox{and} \quad  a_{ij}=0&\Rightarrow & a_{kl}=0 , \ \hbox{if} \ k\geq  i\hbox{, } l \leq j,\\ \label{arribatipoI}
\hbox{if} \quad i<j  \quad \hbox{and} \quad  a_{ij}=0 &\Rightarrow &  a_{kl}=0 , \hbox{if} \ k\leq  i\hbox{, } l \geq j.
\end{eqnarray}
\end{defi}

Now, {the concept of} backward identity matrix  is introduced.

\begin{defi}\label{def_Pn}
An  $r \times r$ matrix $P_r=(p_{ij})_{1\leq i,j \leq r}$ is called {a} backward identity matrix if the element $(i,j)$ of the matrix $P_r$ is defined in the form
$$
p_{ij}=\left\{\begin{array}{lr}
                               1, & \qquad \mathrm{if } \  i+j=r+1,\\
                               0, &   \mathrm{otherwise},
                               \end{array}\right.
                               $$
i.e.
$$
P_r=\left(
      \begin{array}{ccccc}
        0 & 0 & \cdots & 0 & 1 \\
        0 & 0 & \cdots & 1 & 0 \\
        \vdots &  \vdots & \vdots &  \vdots & \vdots \\
        0 & 1 & \cdots & 0 & 0 \\
        1 & 0 & \cdots & 0 & 0\\
      \end{array}
    \right).
$$
\end{defi}

\begin{rmk}
Note that the left product by the matrix $P_m$ produces a permutation in the rows of the matrix $A$, so, the row $k$ of $P_m A$ is the row $m-k+1$ of matrix $A$, with $k= 1,\dots, m$.
\end{rmk}

\begin{defi}
A matrix $A=\left(a_{ij}\right)_{1 \leq i \leq m, 1 \leq j \leq n}$ is called type-II staircase if it satisfies that $P_m A$ is a type-I staircase matrix.
\end{defi}

\begin{rmk}
Note that this is a generalization of the type-I and type-II staircase matrices of a square matrix \cite{Alonso2015,RongHuang2012}.
\end{rmk}

In order to describe the zero pattern of a nonsingular type-I staircase (or type-II staircase) {matrix $A$}, it is adequate to introduce the {following} sets of indices that were presented for $ n \times n $ matrices by M. Gasca and J. M. Peña in \cite{Char2006}.

\begin{defi}\label{notacion_Iescalonada}
Let $A=\left(a_{ij}\right)_{1 \leq i \leq m, 1 \leq j \leq n}$ be a type-I staircase matrix and  $r =\min \{m,n \}$. We denote
\begin{equation}\label{def_ik}
i_0=1, \qquad j_0=1,
\end{equation}
and for $k=1,\dots ,l$, where $l$ will be the first integer such that $j_l \geq r+1$, we define
\begin{equation}
i_k=\max \left\{i  \ / \ a_{ij_{k-1}}\not= 0 \right\}+1 ,
\end{equation}
if $i_k<r+1$ then
\begin{equation}\label{def_jk}
j_k=\max \left\{j \leq i_k 
\ / \ a_{i_{k}j} = 0 \right\}+1,
\end{equation}
if $i_{k}=r+1$ then $j_k=n+1$.

{By} applying the same process to $A^T$ the indices $\overline{i}_k$ and $\overline{j}_k$ are obtained. Denoting by $\widehat{i}_k=\overline{j}_k$ and $\widehat{j}_k=\overline{i}_k$ we have the index sets
$$
\begin{array}{ccccccc}
  I &=& \left\{i_0,i_1,\dots , i_l\right\}, & \qquad &
  J &=& \left\{j_0,j_1,\dots , j_l\right\}, \\
  \widehat{I} &=& \left\{\widehat{i}_0,\widehat{i}_1,\dots , \widehat{i}_s\right\}, & \qquad &
  \widehat{J} &=& \left\{\widehat{j}_0,\widehat{j}_1,\dots , \widehat{j}_s\right\}.
  \end{array}
$$
that describe the zero-nonzero pattern of the matrix $A$.
\end{defi}

\begin{ejem}\label{ejempatron}
The zero-nonzero pattern of the matrix
$$
A=\left(
    \begin{array}{ccccc}
      1 & 1& 0 & 0 & 0 \\
      1 & 2 & 0 & 0 & 0 \\
      0 & 1 & 1 & 1 & 0 \\
      0 & 0 & 0 & 1 & 1 \\
      0 & 0 & 0 & 2 & 3 \\
      0 & 0 & 0 & 4 & 5 \\
      0 & 0 & 0 & 0 & 0 \\
    \end{array}
  \right)
$$
is given by the index sets $I=\left\{1,3,4,7\right\}$, $J=\left\{1,2,4,6\right\}$. {By} applying the process for the matrix
$$
A^T=\left(
      \begin{array}{ccccccc}
        1 & 1 & 0 & 0 & 0 & 0 & 0 \\
        1 & 2 & 1 & 0 & 0 & 0 & 0 \\
        0 & 0 & 1 & 0 & 0 & 0 & 0 \\
        0 & 0 & 1 & 1 & 2 & 4 & 0 \\
        0 & 0 & 0 & 1 & 3 & 5 & 0 \\
      \end{array}
    \right)
$$
we have $\overline{I}=\{ 1,3,5,6\}$, $\overline{J}=\{1,3,4,8\}$, so that $\widehat{I}=\left\{1,3,4,8\right\}$ and $\widehat{J}=\left\{1,3,5,6\right\}$.
\end{ejem}

\begin{rmk}
Note that in {the case where} $m=n$ then this definition of zero pattern coincides with that defined in \cite{Char2006}.
\end{rmk}

In addition, for subsequent results, it is also necessary to introduce the indices $j_t$ and $\widehat{i}_t$ (see Theorem 2.4 of \cite{Char2006}).

\begin{defi}\label{defi_j_t,igorro_t}
Let $A=\left(a_{ij}\right)_{1 \leq i \leq m, 1 \leq j \leq n}$ be a real $m \times n$ matrix, type-I staircase, with zero pattern $I$, $J$, $\widehat{I}$ and $\widehat{J}$.

If $j \leq i$ we define
\begin{equation}\label{def_jt}\index{$j_t$}
  j_t=\max \left\{j_s \ / \ 0 \leq s \leq k-1, \ j-j_s \leq i-i_s \right\},
\end{equation}
{with $k$ being the only} index satisfying that $j_{k-1} \leq j < j_k$. We will denote {by $i_t$\index{$i_t$} the $t$-th element} of $I$, where $t$ is the index of $j_t$.

If $j > i$  we define
\begin{equation}\label{def_itgorro}\index{$\widehat{i}_t$}
\widehat{i}_t=\max \left\{\widehat{i}_s \ / \ 0 \leq s \leq k'-1\ , i-\widehat{i}_s \leq j-\widehat{j}_s \right\},
\end{equation}
{with $k'$  being the only} index satisfying that $\widehat{i}_{k'-1} \leq i < \widehat{i}_{k'}$. We will denote {by $\widehat{j}_t$ the $t$-th element} of $\widehat{J}$, where $t$ is the index of $\widehat{i}_t$.
\end{defi}

Next, the concept of a nontrivial submatrix is introduced, which is necessary in order to subsequently define the almost strictly sign regular matrices.

\begin{defi}
Let $A=\left(a_{ij}\right)_{1 \leq i \leq m, 1 \leq j \leq n}$ be a real $m \times n$ type-I staircase matrix and $k \leq \min \{m,n\}$. A submatrix $A[\alpha | \beta ]$, with $\alpha \in Q_{k,m}$ and $\beta \in Q_{k,n}$ is nontrivial if all $i \in \{1,2,\dots , k\}$ fulfills that $a_{\alpha_i,\beta_i}\not=0$. Otherwise we will say that $A[\alpha | \beta ]$ is a trivial submatrix.
\end{defi}

\begin{defi}
Let $A=\left(a_{ij}\right)_{1 \leq i \leq m, 1 \leq j \leq n}$ be a real $m \times n$ type-II staircase matrix and $k \leq \min \{m,n\}$. A submatrix $A[\alpha | \beta ]$, with $\alpha \in Q_{k,m}$ and $\beta \in Q_{k,n}$ is nontrivial if $A[\alpha | \beta ]^T$ is a nontrivial submatrix of $A^T$.
\end{defi}

The minor associated {with} a nontrivial submatrix ($A[\alpha | \beta ]$) is called {a} nontrivial minor ($\det A[\alpha | \beta]$).

In \cite{Char2006} the authors define the concept of column boundary minor, characterizing in Theorem 2.4 the almost strictly positive matrices $n \times n$ from the sign of these minors, which {significantly} reduces the number of minors to study. Next, we will proceed to the generalization of this concept when an $m \times n$ real matrix is considered.

\begin{defi}\label{sub-frontera}
Given an $m \times n$ type-I staircase matrix $A=\left(a_{ij}\right)_{1 \leq i \leq m, 1 \leq j \leq n}$, let $B:=A[\alpha |\beta]$ {be}  a submatrix of $A$ with $\alpha \in Q_{k,m}^{0}$, $\beta \in Q_{k,n}^{0}$, $k \in \{1,\dots,$ $\min\{m,n\}\}$ and $a_{\alpha_1,\beta_1}\not=0$, $a_{\alpha_2,\beta_2}\not=0, \dots , a_{\alpha_k,\beta_k}\not=0$. Then $B$ is a column boundary submatrix if either $\beta_1=1$ or $\beta_1>1$ and $A[\alpha|\beta_1 -1 ]= 0$.

Analogously, $B$ is a row boundary submatrix if either $\alpha_1=1$ or $\alpha_1>1$ and $A[\alpha_1 -1 |\beta ]= 0$.
\end{defi}

If $A$ is a type-II staircase matrix, then $B=A[\alpha |\beta]$ is a column boundary submatrix of $A$ if $P_k B$ is a column boundary submatrix of $P_mA$.

Minors corresponding to column or row boundary submatrices are called, respectively,
column or row boundary minors.

The boundary submatrices (minors) that verify that $\alpha_1=1$ or $\beta_1=1$ are called initial boundary submatrices (minors) of $A$.

The zero pattern of a type-I staircase matrix defines the rows and columns involved in the boundary minors of a matrix, as we indicate in the following note.

\begin{rmk}
Let $A$ be an $m \times n$ type-I staircase matrix with zero pattern defined by $I$, $J$, $\widehat{I}$, $\widehat{J}$ and let $B:=A[\alpha |\beta]$ {be} a submatrix of $A$ with $\alpha \in Q_{k,m}^{0}$, $\beta \in Q_{k,n}^{0}$, $k \in \{1,\dots , \min\{m,n\}\}$ and $a_{\alpha_1,\beta_1}\not=0$, $a_{\alpha_2,\beta_2}\not=0, \dots , a_{\alpha_k,\beta_k}\not=0$. Then $B$ is a column boundary submatrix if there exists $k \geq 1$ such that $\beta_1=j_k$ and $\alpha_1\geq i_k$. Analogously, $B$ is a row boundary submatrix if there exists $k\geq 1$ such that $\alpha_1=\widehat{i}_k$ and $\beta_1\geq \widehat{j}_k$.
\end{rmk}

Next, the concept of column generalized boundary submatrix is defined.

\begin{defi}
Given an $m \times n$ matrix $A=\left(a_{ij}\right)_{1 \leq i \leq m, 1 \leq j \leq n}$, let $B:=A[\alpha |\beta]$ a submatrix of $A$ with $\alpha \in Q_{k,m}^{0}$, $\beta \in Q_{k,n}^{0}$, $k \in \{1,\dots,$ $\min\{m,n\}\}$ and $a_{\alpha_1,\beta_1}\not=0$. Then $B$ is a column generalized boundary submatrix if either $\beta_1=1$ or $\beta_1>1$ and $A[\alpha|\beta_1 -1 ]= 0$.

Analogously, $B$ is a row generalized boundary submatrix if either $\alpha_1=1$ or $\alpha_1>1$ and $A[\alpha_1 -1 |\beta ]= 0$.
\end{defi}

Minors corresponding to column (row) generalized boundary submatrices are called column (row) generalized  boundary minors.

\section{Almost strictly sign regular rectangular matrices}

Almost strictly sign regular square matrices have been studied in recent years by different authors (see, for instance \cite{Adm2016,Alonso2015,RongHuang2012}). Next we are going to perform a study of a generalization of this concept to rectangular matrices. For this purpose, some previous concepts necessary for the following results are presented below.

\begin{defi}\label{defsig}
Given a vector $\varepsilon=(\varepsilon_1, \varepsilon_2, \dots , \varepsilon_ n) \in \mathbb{R}^n$, we say that $\varepsilon$ is a signature sequence, or simply, is a signature, if $\varepsilon_i=\pm 1$ for all  $i \leq n$.
\end{defi}

Next, the regular sign matrices (SR), the totally positive matrices (TP) and the almost strictly regular matrices (ASSR) will be defined.

\begin{defi}\label{defSR}
Let $A$ be an $m \times n$ matrix with $\hbox{rank} (A)=r= \min \{m,n\}$.  $A$ is said to be SR with signature $\varepsilon=(\varepsilon_1, \varepsilon_2, \dots , {\varepsilon_ r})$ if it is type-I or type-II staircase and all its minors  satisfy that
\begin{equation}\label{def_sr_gen}
    \varepsilon_k\det A\left[\alpha | \beta \right]\geq 0, \quad \alpha \in Q_{k,m} , \beta \in Q_{k,n}, \quad k \leq r.
\end{equation}
\end{defi}

\begin{defi}\label{TP}
Let $A$ be an $m \times n$ matrix with $\hbox{rank} (A)=r= \min \{m,n\}$. $A$ is said to be TP if {it} is SR with $\varepsilon=(1,1,\dots , 1)$.
\end{defi}

\begin{defi}\label{defASSR}
Let $A$ be an $m \times n$ matrix with $\hbox{rank} (A)=r= \min \{m,n\}$. $A$ is said to be ASSR with signature $\varepsilon=(\varepsilon_1, \varepsilon_2, \dots , {\varepsilon_ r})$ if it is either type-I or type-II staircase and all its nontrivial minors $\det A[\alpha | \beta ]$ satisfy that
\begin{equation}\label{def_assr_gen}
  \varepsilon_k\det A\left[\alpha | \beta \right]>0, \quad \alpha \in Q_{k,m} , \beta \in Q_{k,n}, \quad k \leq r.
\end{equation}
\end{defi}

Starting from the definition of ASSR matrices, it is possible to prove the following results:

\begin{propo}\label{ASSRtipo-IrectSubtb}
Let $A$ be an $m \times n$ matrix with $\hbox{rank} (A)=r= \min \{m,n\}$. If $A$ is an ASSR and type-I staircase matrix, then any nontrivial square submatrix of $A$ is an ASSR matrix.
\end{propo}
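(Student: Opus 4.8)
The plan is to verify directly the three requirements in Definition~\ref{defASSR} for the candidate submatrix, writing $B:=A[\alpha|\beta]$ with $\alpha=(\alpha_1,\dots,\alpha_p)\in Q_{p,m}$, $\beta=(\beta_1,\dots,\beta_p)\in Q_{p,n}$ and $p\le r$; nontriviality of $B$ means precisely that $b_{ii}=a_{\alpha_i,\beta_i}\neq 0$ for $i=1,\dots,p$. The underlying observation I would exploit throughout is that every $k\times k$ submatrix of $B$ is again a submatrix of $A$: indeed $B[\gamma|\delta]=A[\alpha_\gamma|\beta_\delta]$, where $\alpha_\gamma:=(\alpha_{\gamma_1},\dots,\alpha_{\gamma_k})$ and $\beta_\delta:=(\beta_{\delta_1},\dots,\beta_{\delta_k})$. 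Thus the sign information for the minors of $B$ will come for free from that of $A$, and the real content is to show that $B$ is type-I staircase.

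The heart of the argument, and the step I expect to be the main obstacle, is checking that $B$ satisfies the three conditions of Definition~\ref{tipoI}. Condition~\eqref{diagtipoI} is immediate from nontriviality. For \eqref{abajotipoI}, suppose $i>j$ and $b_{ij}=a_{\alpha_i,\beta_j}=0$. The key point is to locate the entry $(\alpha_i,\beta_j)$ with respect to the main diagonal of $A$. It cannot be a diagonal entry, since those are nonzero; and it cannot lie strictly above it, because $\alpha_i<\beta_j$ together with \eqref{arribatipoI} applied at $(\alpha_i,\beta_j)$ would force $a_{\alpha_j,\beta_j}=0$ (as $\alpha_j\le\alpha_i$ and $\beta_j\ge\beta_j$), contradicting nontriviality. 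Hence $\alpha_i>\beta_j$, and now \eqref{abajotipoI} for $A$ gives $a_{rs}=0$ for all $r\ge\alpha_i$, $s\le\beta_j$; since $k\ge i$ implies $\alpha_k\ge\alpha_i$ and $l\le j$ implies $\beta_l\le\beta_j$, we conclude $b_{kl}=0$, as required. Condition \eqref{arribatipoI} for $B$ follows from the symmetric reasoning, using \eqref{abajotipoI} of $A$ to rule out $\alpha_i>\beta_j$ and then \eqref{arribatipoI} of $A$ to propagate the zero.

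Once $B$ is known to be type-I staircase it remains to establish maximal rank and the sign condition. For the rank, note that $B$ is itself a nontrivial submatrix of $A$, so $\det B=\det A[\alpha|\beta]$ is a nontrivial minor, hence nonzero by the ASSR hypothesis; therefore $B$ is nonsingular and $\mathrm{rank}(B)=p=\min\{p,p\}$. For the sign, let $B[\gamma|\delta]$ be any nontrivial $k\times k$ submatrix of $B$, i.e. $b_{\gamma_i,\delta_i}=a_{\alpha_{\gamma_i},\beta_{\delta_i}}\neq 0$ for all $i$. Then $A[\alpha_\gamma|\beta_\delta]$ is a nontrivial submatrix of $A$, so \eqref{def_assr_gen} yields $\varepsilon_k\det B[\gamma|\delta]=\varepsilon_k\det A[\alpha_\gamma|\beta_\delta]>0$. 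Thus $B$ is ASSR with signature $(\varepsilon_1,\dots,\varepsilon_p)$, which completes the proof.

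Summarizing, the only delicate step is the staircase verification, and within it the crux is using the nonvanishing of the diagonal entries $a_{\alpha_i,\beta_i}$ to decide on which side of the diagonal of $A$ a vanishing entry $a_{\alpha_i,\beta_j}$ must lie, so that the correct propagation rule \eqref{abajotipoI} or \eqref{arribatipoI} can be invoked; everything else is bookkeeping on the index sequences.
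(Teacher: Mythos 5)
Your proof is correct and takes essentially the same route as the paper's: you show $B=A[\alpha|\beta]$ inherits the type-I staircase structure by locating each zero entry $a_{\alpha_i,\beta_j}$ relative to the main diagonal of $A$ and propagating it via \eqref{abajotipoI} or \eqref{arribatipoI}, and then observe that every nontrivial minor of $B$ is a nontrivial minor of $A$, so its sign is governed by $\varepsilon_k$. You are in fact somewhat more careful than the paper, which omits the step (your use of $a_{\alpha_j,\beta_j}\neq 0$ together with \eqref{diagtipoI}) showing that a zero below the diagonal of $B$ must lie below the diagonal of $A$, and which leaves implicit the verification that $\det B\neq 0$ gives the rank condition required by Definition~\ref{defASSR}.
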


\begin{proof}
Let $B=A\left[\alpha |\beta \right]$ be a nontrivial submatrix of $A$. Being a nontrivial submatrix it holds that $a_{\alpha_i,\beta_i}\not=0$ for all $i$. Let us suppose that $a_{\alpha_i,\beta_j} =0$, then, by Definition \ref{tipoI}
\begin{itemize}
  \item If $\alpha_i > \beta_j$, then $a_{kl}=0 \; \forall k \geq \alpha_i$, $ l \leq \beta_j $ and so $a_{\alpha_k,\beta_l}=0$ for each $\beta_l \leq \beta_j,\  \alpha_i \leq \alpha_k$.
  \item If $\alpha_i < \beta_j$, $a_{kl}=0$ $\forall k \leq \alpha_i, \ l \geq \beta_j$ then $a_{\alpha_k,\beta_l}=0$ for each $\beta_l \geq \beta_j,\  \alpha_i \geq \alpha_k$.
\end{itemize}
Thus, $B$ is a type-I staircase submatrix of $A$. Therefore, any nontrivial submatrix of $B$ is a nontrivial submatrix of $A$ and by Definition \ref{defASSR} it satisfies that the nontrivial minors of order $s$ of $B$ are the same sign 
{as} $\varepsilon_s$, and so $A$ is an ASSR matrix.
\end{proof}

\begin{coro}\label{coro1}
Let $A$ be an $m \times n$ matrix with $\hbox{rank} (A)=r= \min \{m,n\}$. If $A$ is an ASSR and type-II staircase matrix, then any nontrivial square submatrix of $A$ is an ASSR matrix.
\end{coro}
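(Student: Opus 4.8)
The plan is to reduce the type-II case to the type-I case already settled in Proposition \ref{ASSRtipo-IrectSubtb} by means of the backward identity matrix $P_m$ of Definition \ref{def_Pn}. Set $\tilde A := P_m A$. By the definition of a type-II staircase matrix, $\tilde A$ is type-I staircase, and since $P_m$ is a permutation it has the same rank $r = \min\{m,n\}$ as $A$. The first step is therefore to show that $\tilde A$ is itself ASSR as a type-I matrix, after which Proposition \ref{ASSRtipo-IrectSubtb} can be invoked directly.

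To this end I would translate minors of $A$ into minors of $\tilde A$. If $\alpha = (\alpha_1,\dots,\alpha_k) \in Q_{k,m}$ and $\tilde\alpha = (m-\alpha_k+1,\dots,m-\alpha_1+1)$ denotes the increasing reordering of the reversed row indices, then selecting the rows $\tilde\alpha$ of $\tilde A$ returns exactly the rows of $A[\alpha \mid \beta]$ in reversed order, so that
\[
\tilde A[\tilde\alpha \mid \beta] = P_k\,A[\alpha \mid \beta], \qquad \det \tilde A[\tilde\alpha\mid\beta] = (-1)^{k(k-1)/2}\det A[\alpha\mid\beta].
\]
Moreover, by the very definition of nontriviality for type-II matrices, $A[\alpha\mid\beta]$ is nontrivial precisely when $\tilde A[\tilde\alpha\mid\beta]$ is a nontrivial submatrix of the type-I matrix $\tilde A$. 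Combining these two facts with the hypothesis that $A$ is ASSR with signature $\varepsilon$, every nontrivial minor of $\tilde A$ satisfies $\tilde\varepsilon_k \det \tilde A[\tilde\alpha\mid\beta] > 0$ with $\tilde\varepsilon_k := (-1)^{k(k-1)/2}\varepsilon_k$, which is again a signature; hence $\tilde A$ is ASSR and type-I staircase in the sense of Definition \ref{defASSR}.

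With $\tilde A$ in hand, Proposition \ref{ASSRtipo-IrectSubtb} guarantees that every nontrivial square submatrix of $\tilde A$ is ASSR, and it remains to pull this back to $A$. Given a nontrivial square submatrix $B = A[\alpha\mid\beta]$ of $A$, the identity above shows that $P_k B = \tilde A[\tilde\alpha\mid\beta]$ is a nontrivial submatrix of $\tilde A$, hence ASSR and type-I staircase. Since $B = P_k(P_k B)$, the matrix $B$ is type-II staircase, and each of its nontrivial minors of order $s$ coincides, up to the fixed factor $(-1)^{s(s-1)/2}$, with the corresponding nontrivial minor of $P_k B$. Therefore $B$ is ASSR with the signature obtained from that of $P_k B$ by the same sign correction, which completes the argument.

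The main obstacle I anticipate lies in the bookkeeping at the level of nontriviality rather than in the signs: one must check carefully that the (transpose-based) notion of a nontrivial submatrix of a type-II matrix corresponds exactly, under the row reversal $B \mapsto P_k B$, to the nonzero-diagonal notion of nontriviality for the type-I matrix $\tilde A$, so that nontrivial minors are sent to nontrivial minors and no trivial minors are spuriously introduced. Once that correspondence is verified, the factors $(-1)^{k(k-1)/2}$ are uniform across each order $k$ and merely redefine the signature, so the reduction to Proposition \ref{ASSRtipo-IrectSubtb} proceeds without further difficulty.
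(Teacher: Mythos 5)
Your proof is correct and takes essentially the same approach as the paper, whose entire proof is the one-line observation that one passes to $P_m A$, which is type-I staircase, and applies Proposition \ref{ASSRtipo-IrectSubtb}. You merely make explicit the bookkeeping the paper leaves implicit---the identity $\tilde A[\tilde\alpha \mid \beta] = P_k A[\alpha\mid\beta]$, the uniform sign factor $(-1)^{k(k-1)/2}$ at each order $k$ (which only modifies the signature), and the correspondence between the type-II and type-I notions of nontriviality---and all of these check out.
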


\begin{proof}
The proof follows from taking $P_m A$, which is type-I, and applying the above result.
\end{proof}

\begin{propo}\label{signaturasubmatriz}
Let $A$ be an $m \times n$ matrix with $\hbox{rank} (A)=r= \min \{m,n\}$. Then $A$ is ASSR with signature $\varepsilon=(\varepsilon_1,\dots , \varepsilon_r)$ if and only if any nontrivial submatrix of order $k$ of $A$, with $k \leq r$, is an ASSR matrix with signature $\varepsilon=(\varepsilon_1,\dots , \varepsilon_k)$.
\end{propo}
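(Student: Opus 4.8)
The plan is to prove the two implications separately, leaning on Proposition \ref{ASSRtipo-IrectSubtb} and Corollary \ref{coro1} for the direct implication and on a short determinant observation for the converse. Throughout I use that $A$ is type-I or type-II staircase, which is implicit in the very notion of a nontrivial submatrix.

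For the necessity ($\Rightarrow$), I would assume $A$ is ASSR with signature $\varepsilon$ and take an arbitrary nontrivial submatrix $B=A[\alpha|\beta]$ of order $k$. If $A$ is type-I staircase, Proposition \ref{ASSRtipo-IrectSubtb} already guarantees that $B$ is a type-I staircase ASSR matrix; if $A$ is type-II staircase, Corollary \ref{coro1} gives the same conclusion. The remaining point is to pin down the signature of $B$. I would argue that a nontrivial minor of order $s\le k$ of $B$ is, after composing the two index selections, a nontrivial minor of order $s$ of $A$, so by Definition \ref{defASSR} its sign is exactly $\varepsilon_s$. Hence $B$ is ASSR with the truncated signature $(\varepsilon_1,\dots,\varepsilon_k)$.

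For the sufficiency ($\Leftarrow$), I would assume every nontrivial submatrix of order $k\le r$ is ASSR with signature $(\varepsilon_1,\dots,\varepsilon_k)$. Since $A$ is type-I or type-II staircase, to prove it is ASSR it suffices to check the sign of each of its nontrivial minors. Fix a nontrivial minor $\det A[\alpha|\beta]$ of order $k$ and set $B:=A[\alpha|\beta]$. Because $B$ is nontrivial, its diagonal entries $b_{ii}=a_{\alpha_i,\beta_i}$ are all nonzero, so $B$, regarded as a $k\times k$ matrix, is itself a nontrivial submatrix of order $k$ (taking all of its rows and columns). Applying the hypothesis to $B$, which is ASSR with signature $(\varepsilon_1,\dots,\varepsilon_k)$, its top-order nontrivial minor satisfies $\varepsilon_k\det B>0$, that is, $\varepsilon_k\det A[\alpha|\beta]>0$. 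As the chosen minor was arbitrary, every nontrivial minor of $A$ carries the prescribed sign and $A$ is ASSR with signature $\varepsilon$.

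The routine verifications (that the submatrix selections compose correctly and that $B$ inherits the staircase structure) are already contained in the proof of Proposition \ref{ASSRtipo-IrectSubtb}, so the converse reduces to the single remark that the full determinant of a nontrivial $k\times k$ submatrix is one of its own nontrivial minors. The main point to watch is the bookkeeping of the signature in the necessity direction: one must confirm that an order-$s$ nontrivial minor of $B$ really corresponds to an order-$s$ (not merely \emph{some} order) nontrivial minor of $A$, so that the index of $\varepsilon_s$ is preserved. This is exactly what forces the submatrix signature to be the genuine truncation $(\varepsilon_1,\dots,\varepsilon_k)$ rather than an unrelated sign sequence, and it is the only step requiring care.
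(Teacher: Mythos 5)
Your proposal is correct and follows essentially the same route as the paper: the necessity direction invokes Proposition~\ref{ASSRtipo-IrectSubtb} and Corollary~\ref{coro1} and then observes that each order-$s$ nontrivial minor of a nontrivial submatrix is an order-$s$ nontrivial minor of $A$, while the sufficiency direction applies the hypothesis to the submatrix $A[\alpha|\beta]$ itself and reads off $\varepsilon_k\det A[\alpha|\beta]>0$ from its top-order minor. Your explicit remark that the full determinant of a nontrivial $k\times k$ submatrix is one of its own nontrivial minors just makes precise what the paper's one-line converse leaves implicit.
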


\begin{proof}
By Proposition \ref{ASSRtipo-IrectSubtb} and Corollary \ref{coro1} if $A$ is an ASSR matrix, then its nontrivial square submatrices are also ASSR matrices. Since each nontrivial minor of those submatrices is a nontrivial minor of $A$, the sign must necessarily be that of the corresponding signature of $A$, so the direct implication is proved.

Let us suppose now that it is satisfied that each nontrivial submatrix of order $k$ of $A$, with $k \leq r$, is an ASSR matrix with signature $\varepsilon=(\varepsilon_1,\dots , \varepsilon_k)$. Then, if $A\left[\alpha |\beta\right]$ is a nontrivial submatrix of $A$, applying the hypothesis, $\varepsilon_k\det A\left[\alpha |\beta\right]>0$, therefore it is true that $A$ is ASSR with signature $\varepsilon=(\varepsilon_1,\dots , \varepsilon_r)$, and the theorem has been proved.
\end{proof}

By generalizing Theorem 10 of \cite{RongHuang2012} we can reduce the number of minors that characterize this type of matrices. For this we will need to prove the following lemma:

\begin{lema}\label{lema9}
Let $A$ be an $m \times n$ type-I staircase matrix, $\hbox{rank} (A)=r= \min \{m,n\}$, $\varepsilon=(\varepsilon_1,\varepsilon_2,\dots,\varepsilon_r )$ a signature sequence and  {let us} denote by $h$ the value defined as
$$
  h=\min \left\{ |i-j| \ / \ a_{ij}=0, \ i \in \{1,2,\dots,m\},\ j\in \{1,2,\dots,n\} \right\}.
$$
If all the nontrivial minors of $A$ verify that
 $$
  \varepsilon_k\det A\left[\alpha |\beta \right] >0, \quad \alpha \in Q^0_{k,m}, \quad \beta \in Q^0_{k,n},\quad 1\leq k \leq r,
 $$
then
$$
  \varepsilon_2=\varepsilon_1^2, \ \varepsilon_3 = \varepsilon_1^3,\cdots, \varepsilon_{r-h+1}=\varepsilon_1^{r-h+1}.
$$
\end{lema}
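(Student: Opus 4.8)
The plan is to prove the sharper pointwise statement $\varepsilon_k=\varepsilon_1\varepsilon_{k-1}$ for every $k$ with $2\le k\le r-h+1$; since each $\varepsilon_i=\pm1$, an immediate induction (base $\varepsilon_1=\varepsilon_1^1$) then gives $\varepsilon_k=\varepsilon_1\cdot\varepsilon_1^{k-1}=\varepsilon_1^k$ throughout the range, which is the assertion. The starting observation is the case $k=1$ of the hypothesis: for a single index the sets $Q^0_{1,m}$, $Q^0_{1,n}$ consist of all indices, so $\varepsilon_1a_{ij}>0$ for every nonzero entry $a_{ij}$; that is, \emph{all} nonzero entries of $A$ share the sign $\varepsilon_1$. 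This is the fact that makes the sign bookkeeping below collapse to a single product.

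The core step is a one-term cofactor expansion anchored at a zero of minimal distance. By definition of $h$ there is an entry $a_{pq}=0$ with $|p-q|=h$, and no zero lies strictly closer to the diagonal, so every entry with $|i-j|<h$ is nonzero. Assume first $p>q$, so $p-q=h$ (the case $p<q$ is symmetric, expanding along a row instead of a column, equivalently passing to $A^{T}$). By the type-I condition (\ref{abajotipoI}) the whole lower-left quadrant $\{(k,l):k\ge p,\ l\le q\}$ vanishes; in particular column $q$ is zero in rows $p,p+1,\dots$, while $a_{p-1,q}$, being at distance $h-1<h$, is nonzero. Form the consecutive submatrix $M:=A[\alpha|\beta]$ with $\alpha=(p-1,p,\dots,p+k-2)\in Q^0_{k,m}$ and $\beta=(q,q+1,\dots,q+k-1)\in Q^0_{k,n}$. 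Its diagonal entries $a_{p-1+t,\,q+t}$ ($t=0,\dots,k-1$) all lie at distance exactly $h-1$ from the main diagonal, hence are nonzero, so $M$ is a nontrivial minor and $\operatorname{sign}(\det M)=\varepsilon_k$. Its first column is column $q$ restricted to rows $p-1,\dots,p+k-2$, which has the single nonzero entry $a_{p-1,q}$ at the top. Expanding along that column leaves
\[
\det M=a_{p-1,q}\,\det A[(p,\dots,p+k-2)\,|\,(q+1,\dots,q+k-1)],
\]
with cofactor sign $+1$. The remaining factor $M':=A[(p,\dots,p+k-2)|(q+1,\dots,q+k-1)]$ is again consecutive with diagonal entries at distance $h-1$, hence a nontrivial minor with $\operatorname{sign}(\det M')=\varepsilon_{k-1}$. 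Since $a_{p-1,q}$ has sign $\varepsilon_1$, comparing signs yields $\varepsilon_k=\varepsilon_1\varepsilon_{k-1}$, and the induction closes.

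The delicate point, and the place where the bound $r-h+1$ is actually forced, is purely one of \emph{fitting}: the block $M$ occupies rows $p-1,\dots,p+k-2$ and columns $q,\dots,q+k-1$, so as written the argument needs $p+k-2\le m$ and $q+k-1\le n$, which need not hold for all $k$ up to $r-h+1$ if the minimal zero sits away from a corner. The main work is therefore to realize the relation $\varepsilon_k=\varepsilon_1\varepsilon_{k-1}$ for every $k\le r-h+1$ by a suitably placed block of this type. I would do this by exploiting the full staircase geometry rather than a single zero: the same quadrant that annihilates column $q$ also annihilates row $p$ in columns $\le q$ (so $a_{p,q+1}\neq0$ is the unique nonzero entry of that row-segment), which lets me anchor the block at the opposite corner and expand along a row; and moving between $A$ and $A^{T}$, i.e.\ between the type-I and type-II descriptions, interchanges the two deficient corners. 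Using the index sets $I,J,\widehat I,\widehat J$ of Definition \ref{notacion_Iescalonada} to locate, for each order $k$, an initial boundary zero whose quadrant leaves room for a $k\times k$ consecutive nontrivial block, one checks that such a block exists precisely while $k\le r-h+1$, which is exactly the stated range. I expect this fitting and case analysis to be the only genuine obstacle; the sign computation itself is the short cofactor step above.
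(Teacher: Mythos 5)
Your sign bookkeeping is correct as far as it goes (all nonzero entries share the sign $\varepsilon_1$; the cofactor sign in your expansion is $+1$; the reduced window is again consecutive and nontrivial), and you correctly flagged the fitting problem. But the deferred step --- ``one checks that such a block exists precisely while $k\le r-h+1$'' --- is not merely unverified: for your class of blocks it is \emph{false}, so the plan of proving the pointwise relation $\varepsilon_k=\varepsilon_1\varepsilon_{k-1}$ for every $k\le r-h+1$ cannot be carried out. A one-term expansion needs a consecutive nontrivial $k\times k$ window whose first column (or last row) has a single nonzero at the extreme position; since every entry at distance $<h$ from the diagonal is nonzero, such a window is forced to be anchored exactly at the minimal zero, i.e.\ rows $p-1,\dots,p+k-2$, columns $q,\dots,q+k-1$, or the opposite-corner analogue, with reaches $k\le\min(m-p+2,\,n-q+1)$ and $k\le\min(p,\,q+1)$ respectively. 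Concretely, take $n=10$, $m\ge 17$, and zero set exactly $\{(i,j):\ i\ge 8,\ j\le 5\}$ (a legitimate type-I staircase pattern whose unique minimal zero is $a_{85}$, so $h=3$ and $r-h+1=8$): both anchorings stop at $k=6$, and any other consecutive window with a single-nonzero boundary line puts $a_{85}$ on its own diagonal and is therefore trivial. So $\varepsilon_7$ and $\varepsilon_8$ are unreachable by your mechanism, while the lemma still asserts $\varepsilon_7=\varepsilon_1^7$ and $\varepsilon_8=\varepsilon_1^8$.

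What survives sliding the window along the band at distance $h-1$ is only the weaker two-factor identity, and that is the paper's actual mechanism: position a consecutive $k\times k$ window through the minimal zero so that the zero's staircase shadow fills a full corner rectangle meeting the window's diagonal; the window is then block triangular and its determinant splits as a product of two consecutive nontrivial minors of orders $j$ and $k-j$, giving $\varepsilon_k=\varepsilon_j\varepsilon_{k-j}$ with $j$ depending on where the window sits. Because $j$ is free, such a window can be fitted for \emph{every} $k\le r-h+1$ (in the example above, $k=8$ is handled by, e.g., $\varepsilon_8=\varepsilon_5\varepsilon_3$), and ordinary induction then closes to $\varepsilon_k=\varepsilon_1^k$. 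It is worth noting that the paper pins its window at rows $1,\dots,k$ and columns $h,\dots,h+k-1$, which tacitly assumes the minimal zero lies in the first $k-1$ rows; in general position one must translate the window exactly as your ``fitting'' paragraph anticipates. So your worry was genuine --- but translation rescues only the two-factor identity $\varepsilon_k=\varepsilon_j\varepsilon_{k-j}$, not your one-factor identity $\varepsilon_k=\varepsilon_1\varepsilon_{k-1}$, and repairing your proof means replacing its core step by the paper's block factorization.
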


\begin{proof}
Since $A$ is a type-I staircase matrix, then $a_{ii}\not=0$ for all $i\in \{1,2,\dots,$ $\min\{m,n\}\}$, and $h>0$. Without loss of generality, suppose that $a_{ts}=0$, with $t<s$ is such that $h=s-t$. Now, we prove by induction on $k$ that $\varepsilon_k=\varepsilon_1^{k}$.

For $k=1$ the result is trivial. Let us suppose that the result is true for $k$ less than $k-1$ and we will prove that it is true for $k$. Since $A$ is type-I staircase and $a_{ts}=0$ with $t<s$, then $a_{ij}=0$ for $i \leq t,\ s \leq j$ (shadow up from of $a_{ii}$, down it cannot be because $a_{ii}\not= 0$). Then, as $h$ is minimum
 $$
  a_{1h},a_{2,h+1},\dots,a_{k,h+k-1} \not=0,
 $$
in this way, $A\left[1,\dots,k|h,\dots,h+k-1\right]$ is a nontrivial submatrix. Besides, the submatrix $A[1,\ldots,k|h,\ldots,h+k-1]$ has the form
{\small
 $$
\left[\begin{array}{cc}
A\left[1,\dots,t|h,\dots,s-1\right] & 0 \\
* & A\left[t+1,\dots,h+k-1|s,\dots ,h+k-1\right]
\end{array}\right],
$$
}
then
$$
  \hspace*{-6cm}\det A\left[1,\dots,k|h,\dots ,h+k-1\right]=
$$
$$
\hspace{1cm}=\det A\left[1,\dots,t|h,\dots,s-1\right]\det A\left[t+1,\dots,h+k-1|s,\dots ,h+k-1\right].
$$
The minors on the right side of the  equality are non-trivial and of order less than $k$, and applying induction hypothesis, we obtain that
$$
  \varepsilon_{k}=\varepsilon_t\varepsilon_{h+k-1-s+1}\underbrace{=}_{h=s-t} \varepsilon_t\varepsilon_{k-t}=\varepsilon_1^t\varepsilon_1^{k-t}=\varepsilon_1^{k},
$$
and the result is proved.
\end{proof}

\begin{thm}\label{TeoremaMenores}
Let $A$ be an $m \times n$ ($m\geq n$), $\hbox{rank} (A)=r= \min \{m,n\}$ and $\varepsilon=(\varepsilon_1, \dots ,  {\varepsilon_r})$ a signature sequence. Then, $A$ is an ASSR matrix with signature $\varepsilon$ if and only if $A$ is type-I or type-II staircase, and satisfies
\begin{equation}\label{caractASSRMenores}
  \varepsilon_k\det A\left[\alpha | \beta \right]>0, \quad \alpha \in Q^0_{k,m} ,\quad \beta \in Q^0_{k,n}, \quad k \leq n.
\end{equation}
\end{thm}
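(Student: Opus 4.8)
The plan is to prove the two implications separately, with the forward one immediate and the converse carrying all the weight. If $A$ is ASSR with signature $\varepsilon$, then by Definition \ref{defASSR} \emph{every} nontrivial minor satisfies $\varepsilon_k\det A[\alpha|\beta]>0$, and restricting to $\alpha\in Q^0_{k,m}$, $\beta\in Q^0_{k,n}$ yields (\ref{caractASSRMenores}) at once. The substance is the converse: assuming $A$ is type-I (or type-II) staircase and that only the nontrivial minors with consecutive rows \emph{and} consecutive columns carry the prescribed sign, one must deduce that \emph{all} nontrivial minors do. Since $P_mA$ is type-I staircase whenever $A$ is type-II, and the correspondence $B\mapsto P_kB$ identifies the nontrivial minors of $A$ with those of $P_mA$ (cf.\ Section 2 and Corollary \ref{coro1}), it suffices to settle the type-I case, which I treat throughout.

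For type-I matrices I would argue by induction on the dispersion $d(\alpha)+d(\beta)$ of a nontrivial minor $B=A[\alpha|\beta]$ of order $k$, with $d$ the gap functional of Section 2. The base case $d(\alpha)+d(\beta)=0$ is exactly $\alpha\in Q^0_{k,m}$, $\beta\in Q^0_{k,n}$, covered by the hypothesis (\ref{caractASSRMenores}). For the inductive step, with at least one gap present, I would invoke a Desnanot--Jacobi/Sylvester determinantal identity that rewrites $\det B$ (times a lower-order minor) as a combination of minors obtained by deleting extreme rows and columns and by closing the gap; each of these has strictly smaller dispersion or smaller order, so the induction hypothesis applies. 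The fact established inside the proof of Proposition \ref{ASSRtipo-IrectSubtb}, that a nontrivial submatrix of a type-I staircase matrix is again type-I staircase, guarantees that $B$ inherits the staircase zero pattern; hence whenever a partitioning zero is present the determinant factors block-triangularly as $\det B=\det B_1\det B_2$ into nontrivial minors of orders $k_1+k_2=k$, exactly as in the decomposition exploited in Lemma \ref{lema9}.

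This is where Lemma \ref{lema9} becomes decisive. When $\det B$ factors as $\det B_1\det B_2$, the induction assigns it the sign of $\varepsilon_{k_1}\varepsilon_{k_2}$, whereas the target is $\varepsilon_k=\varepsilon_{k_1+k_2}$; these coincide precisely because Lemma \ref{lema9} forces the multiplicative relations $\varepsilon_k=\varepsilon_1^{k}$ on the range of orders at which such a factorization through a zero can occur, so that $\varepsilon_{k_1}\varepsilon_{k_2}=\varepsilon_1^{k_1}\varepsilon_1^{k_2}=\varepsilon_1^{k}=\varepsilon_k$. The reduction steps that meet no partitioning zero, by contrast, preserve both the order $k$ and nontriviality, so they transport the sign $\varepsilon_k$ unchanged down to a contiguous minor. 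Feeding both mechanisms into the induction and reaching the base case (\ref{caractASSRMenores}) gives $\varepsilon_k\det A[\alpha|\beta]>0$ for every nontrivial minor, i.e.\ $A$ is ASSR.

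The step I expect to be the main obstacle is the combinatorial bookkeeping of the inductive reduction: one must verify, directly from the explicit type-I pattern (conditions (\ref{diagtipoI})--(\ref{arribatipoI}) and the index sets $I,J,\widehat I,\widehat J$), that each determinantal identity produces only minors that are either genuinely nontrivial with strictly smaller dispersion/order or identically zero, never an uncontrolled mixture; that a partitioning zero can always be selected when $B$ is reducible; and that the orders $k_1,k_2$ of the surviving factors match the signature indices supplied by Lemma \ref{lema9}. The hypothesis $m\ge n$, which gives $r=n$ and hence the full range $k\le n$ in (\ref{caractASSRMenores}), is what makes enough consecutive minors available to anchor this induction.
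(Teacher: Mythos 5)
Your proposal follows essentially the same route as the paper's own proof: the forward implication by restriction, reduction of the type-II case to type-I via $P_m$, a double induction on the order and the dispersion of the minor driven by the three-term Sylvester-type identity (the paper uses equality 1.39 of \cite{Ando1987}, exactly the identity you invoke, closing the gap with an element $p$ strictly between $\beta_1$ and $\beta_k$), and the dichotomy between gap-closing steps and the block-triangular factorization $\det B=\det B_1\det B_2$ through a staircase zero, where Lemma \ref{lema9} reconciles $\varepsilon_{k_1}\varepsilon_{k_2}$ with $\varepsilon_k$ just as you describe. The bookkeeping you flag as the main obstacle is precisely what the paper's Case (1)/Case (2) analysis carries out (superdiagonal entries all nonzero versus some $a_{\alpha_s,\beta_{s+1}}=0$), with the only cosmetic difference being that the paper nests the induction---first fixing $\alpha\in Q^0_{k,m}$ and inducting on $d(\beta)$, then repeating the argument by rows---rather than inducting on $d(\alpha)+d(\beta)$ jointly.
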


\begin{proof}
Let us suppose that $A$ is an ASSR matrix with signature $\varepsilon=(\varepsilon_1, \dots , \varepsilon_r)$. Since $Q_{k,n}^0 \subseteq Q_{k,n}$ and $Q_{k,m}^0 \subseteq Q_{k,m}$, it is evident that, by Definition \ref{defASSR}, (\ref{caractASSRMenores}) also holds.

For the reciprocal, suppose that $A$ is type-I or type-II staircase and that (\ref{caractASSRMenores}) holds and we will prove that $A$ is an ASSR matrix with signature $\varepsilon$. Let $A\left[\alpha|\beta\right]$ be a nontrivial submatrix of $A$ with $\alpha \in Q_{k,m}$ and $\beta \in Q_{k,n}$.

Now, we will prove the result by induction on $k$, the order of  {the} minor.

If $k=1$, since $Q_{1,m}=Q_{1,m}^0$, the result is trivial.

Suppose the result is true for $k-1$, i.e., $\varepsilon_{k-1}\det A\left[\alpha|\beta\right]>0$ for  {any} minor such that $\alpha\in Q_{k-1,m}$, $\beta \in Q_{k-1,n}$, and we will prove that the property is true for $k$. First, we suppose that $\alpha \in Q_{k,m}^0$. Let $\beta \in Q_{k,n}$ be and $\ell=d(\beta)(=\beta_k-\beta_1-(k-1))$.

Next, we will prove (\ref{caractASSRMenores}) by induction on $\ell$. If $\ell =0$ then $\beta \in Q^0_{k,m}$ and the result is fulfilled.

Suppose that $A\left[\alpha | \beta \right]$ is a nontrivial submatrix and $d(\theta ) \leq \ell -1$, with $\alpha \in Q^0_{k,m}$ and $\theta \in Q_{k,n}$. In this case, the result is satisfied.

Let $\beta=(\beta_1,\beta_2,\dots,\beta_k)\in Q_{k,n}$ be with $d(\beta )=\ell \geq 1$, such that $A\left[\alpha|\beta\right]$ is a nontrivial submatrix. Let us take $\tau=(\beta_2,\dots,\beta_{k-1})$, as $d(\beta)=\ell \geq 1$, then, there exists $p \notin \tau$ such that $\beta_1<p<\beta_k$.

Using the equality 1.39 of \cite{Ando1987} we have
 \begin{equation}\label{igAndo}
  \det A\left[w|\tau \cup \{p\}\right] \det A\left[\alpha|\tau \cup \{\beta_1,\beta_k\}\right]=
  \end{equation}
   \begin{equation*}
=\det A\left[w|\tau \cup \{\beta_k\}\right]\det A\left[\alpha |\tau \cup \{\beta_1,p\}\right]+\det  A\left[w|\tau \cup \{\beta_1\}\right]\det A\left[\alpha |\tau \cup \{p,\beta_k\}\right]
  \end{equation*}
and this is true for all $w\in Q_{k-1,m}$, with $w\subset \alpha$.

If $A$ is type-I staircase, we are going to distinguish two cases:
\begin{description}
  \item[Case (1)] Suppose that $a_{\alpha_t,\beta_{t+1}}\not=0$, $\forall t =1,2,\dots,k-1$.

      Since $d(\beta)=\ell \geq 1$ then we can  {guarantee} that there  {exists} $1 \leq i\leq k-1$ such that $\beta_i <p<\beta_{i+1}$, and therefore in this case
      $$
      d(\tau \cup \{\beta_1,p\}) \leq \ell-1 , \quad d(\tau \cup \{p,\beta_k\}) \leq \ell-1
      $$
      by the induction hypothesis we can write
      $$
      \varepsilon_k\det A\left[\alpha | \tau \cup \{\beta_1,p\}\right] \geq 0.
      $$

     Given that $A\left[\alpha |\beta\right]$ is a nontrivial submatrix of $A$, we can conclude that $a_{\alpha_i,\beta_i}\not= 0$. Since are we assuming $a_{\alpha_i,\beta_{i+1}} \not= 0$ then $a_{\alpha_i,p}\not=0$.

  In this way we have
      $$
      a_{\alpha_1,\beta_2}\not=0, \dots, a_{\alpha_{i-1}\beta_i}\not=0 , \ a_{\alpha_i,p}\not=0,\ a_{\alpha_{i+1}\beta_{i+1}}\not=0,\dots, \ a_{\alpha_k,\beta_k}\not=0.
      $$
      These elements make up the main diagonal of the submatrix $A\left[\alpha|\tau \cup \{p,\beta_k\}\right]$,  {which suggests that} this submatrix is nontrivial. By the induction hypothesis, we can conclude that
      $$\varepsilon_k \det A\left[\alpha|\tau \cup \{p,\beta_k\}\right] >0.$$

  In addition,
      $$
      \left\{\begin{array}{c}
                 a_{\alpha_1,\beta_2}\not=0, \dots, a_{\alpha_{i-1}\beta_i}\not=0 , \ a_{\alpha_i,p}\not=0,\ a_{\alpha_{i+1}\beta_{i+1}}\not=0,\dots, \ a_{\alpha_{k-1},\beta_{k-1}}\not=0, \\[3mm]
                 a_{\alpha_1,\beta_1}\not=0, \dots, a_{\alpha_{i-1}\beta_{i-1}}\not=0 , \ a_{\alpha_i,\beta_i}\not=0,\ a_{\alpha_{i+1}\beta_{i+1}}\not=0,\dots, \ a_{\alpha_{k-1},\beta_{k-1}}\not=0.
             \end{array}
      \right.
      $$
 Denoting as $w=(\alpha_1,\dots, \alpha_{k-1})$, we have that the submatrices $A\left[ w| \tau \cup\{p\} \right]$ and $A \left[ w | \tau \cup \{\beta_1\}\right]$ are nontrivial submatrices. By the induction hypothesis on $k$, we obtain
 $$
 \varepsilon_{k-1}\det A\left[ w| \tau \cup\{p\} \right] >0,\ \varepsilon_{k-1}\det A \left[ w | \tau \cup \{\beta_1\}\right] >0.
 $$

As for $A\left[w|\tau \cup \{\beta_k\}\right]$, we cannot  {affirm} that it is nontrivial. Considering that $A$ is type-I staircase and that the submatrix is of order $k-1$, we can deduce that
$$
      \varepsilon_{k-1}\det A\left[ w| \tau \cup\{\beta_k\} \right] \geq 0.
$$
By (\ref{igAndo}), we can write
$$
   \hspace*{-3cm}\varepsilon_{k-1} \varepsilon_{k}\det A\left[w|\tau \cup \{p\}\right] \det A\left[\alpha|\tau \cup \{\beta_1,\beta_k\}\right]=
   $$
   $$
      \hspace*{-3cm}=\underbrace{\varepsilon_{k-1}\det A\left[w|\tau \cup \{p\}\right]}_{> 0} \varepsilon_{k}\det A\left[\alpha|\tau \cup \{\beta_1,\beta_k\}\right]=
$$
$$
 \hspace*{-3cm}=\varepsilon_{k-1} \varepsilon_{k}\det A\left[w|\tau \cup \{\beta_k\}\right]\det A\left[\alpha |\tau \cup \{\beta_1,p\}\right]+
$$
$$
\hspace{2cm}+\varepsilon_{k-1} \varepsilon_{k}\det A\left[w|\tau \cup \{\beta_1\}\right]\det A\left[\alpha |\tau \cup \{p,\beta_k\}\right]=
$$
$$
 \hspace*{-3cm}=\underbrace{\varepsilon_{k-1}\det A\left[w|\tau \cup \{\beta_k\}\right]} _{\geq 0} \underbrace{\varepsilon_{k}\det A\left[\alpha |\tau \cup \{\beta_1,p\}\right]}_{>0}+
$$
$$
\hspace{2cm}+\underbrace{\varepsilon_{k-1} \det A\left[w|\tau \cup \{\beta_1\}\right]}_{>0}\underbrace{\varepsilon_{k}\det A\left[\alpha |\tau \cup \{p,\beta_k\}\right]}_{>0}.
$$
Therefore
$$
\varepsilon_{k}\det A\left[\alpha|\tau \cup \{\beta_1,\beta_k\}\right]>0
$$
and the result holds.

\item[Case(2)] Now, let us suppose that $a_{\alpha_s,\beta_{s+1}}=0$ for some $1 \leq i \leq k-1$. Taking into account that $A$ is a type-I staircase matrix and $A\left[\alpha|\beta\right]$ is a nontrivial submatrix, it holds that $a_{\alpha_s,\beta_{s+1}}$ generates a shadow of zeros upwards, therefore
      $$
      A\left[\alpha|\beta\right]=
      \left(
      \begin{array}{cc}
      A\left[\alpha_1,\dots,\alpha_s|\beta_1,\dots, \beta_s\right] & 0 \\
      * & A\left[\alpha_{s+1},\dots,\alpha_k | \beta_{s+1},\dots,\beta_k \right]
      \end{array}
      \right).
      $$
      Thus, the nontrivial minor $\det A\left[ \alpha | \beta \right]$ can be  {written} as
      $$
      \det A\left[ \alpha | \beta \right]=\det A\left[\alpha_1,\dots,\alpha_s |\beta_1,\dots, \beta_s \right] \det A\left[\alpha_{s+1},\dots,\alpha_k | \beta_{s+1},\dots,\beta_k \right],
      $$
      where each of the minors on the right side of the equality are nontrivial, since its diagonal is part of that of $A\left[ \alpha | \beta \right]$. Besides, as $a_{\alpha_s,\beta_{s+1}}=0$ then
      $$
      h=\min \left\{ |i-j| \ / \ a_{ij}=0, \ i \in \{1,2,\dots,m\},\ j\in \{1,2,\dots,n\} \right\}>0
      $$
      and it is possible to apply Lemma \ref{lema9}, and if $\forall 1 \leq k \leq \hbox{rank} (A)-h+1$ we can conclude that $\varepsilon_k=\varepsilon_1^{k}$.
      Note that  $k \leq \alpha_s + n-\beta_{s+1}+1$, so, as the condition (\ref{def_assr_gen}) is satisfied  for $k-1$, we have that
      $$
      \varepsilon_k\det A[\alpha |\beta] =\varepsilon_1^k\det A[\alpha |\beta] =
      $$
      $$=\varepsilon_1 ^s \det A[\alpha_1,\dots , \alpha_s |\beta_1,\dots , \beta_s] \varepsilon_1^{k-s} \det A[\alpha_1,\dots , \alpha_s |\beta_1,\dots , \beta_s]>0.
      $$
Therefore the condition (\ref{def_assr_gen}) has been proved when it is satisfied that $\alpha \in Q_{k,m}$ with $d(\alpha)=0$ (that is, $\alpha \in Q_{k,m}^0$).

Applying a similar reasoning by rows, (\ref{def_assr_gen}) is satisfied in general. Then the matrix $A$ is ASSR with signature $\varepsilon$.
\end{description}
If the matrix $A$ is type-II staircase, it would suffice to apply Theorem \ref{TeoremaMenores} to the matrix $B=P_mA$.
\end{proof}

The following result considers the  {transposition} of ASSR matrices.

\begin{propo}
Let $A$ be an $m \times n$ ($m\geq n$) with $\hbox{rank} (A)=r= \min \{m,n\}$. Then, $A$ is an ASSR matrix with signature $\varepsilon=(\varepsilon_1,\dots, \varepsilon _r)$ if and only if $A^T$  is also an ASSR matrix with the same signature.
\end{propo}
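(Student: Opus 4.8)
The plan is to exploit the symmetry of the statement: since $(A^T)^T=A$, it suffices to prove the forward implication, that $A$ ASSR with signature $\varepsilon$ forces $A^T$ ASSR with the same signature, and then apply this to $A^T$ to obtain the converse. The backbone is the elementary identity $\det A[\alpha|\beta]=\det A^T[\beta|\alpha]$, valid for all $\alpha\in Q_{k,m}$, $\beta\in Q_{k,n}$, together with $\mathrm{rank}(A^T)=\mathrm{rank}(A)=r$. Hence, if I can establish (i) that $A^T$ is again type-I or type-II staircase and (ii) that $A[\alpha|\beta]$ is nontrivial if and only if $A^T[\beta|\alpha]$ is nontrivial, then for every nontrivial minor of $A^T$ I obtain $\varepsilon_k\det A^T[\beta|\alpha]=\varepsilon_k\det A[\alpha|\beta]>0$, so $A^T$ is ASSR with signature $\varepsilon$ by Definition \ref{defASSR}.

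First I would settle the type-I case, which is clean. Reading Definition \ref{tipoI} for $A^T=(a_{ji})$, the nonzero-diagonal condition \eqref{diagtipoI} is symmetric under transposition, while the two shadow conditions \eqref{abajotipoI} and \eqref{arribatipoI} simply exchange roles: a below-diagonal zero of $A$ propagating down-left becomes an above-diagonal zero of $A^T$ propagating up-right, and conversely. Therefore $A$ is type-I if and only if $A^T$ is type-I. Under this correspondence the nontriviality requirement $a_{\alpha_i,\beta_i}\neq0$ for all $i$ is verbatim the nontriviality requirement for $A^T[\beta|\alpha]$, so (ii) is immediate and the forward implication for type-I matrices follows from the first paragraph.

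The type-II case is where I expect the genuine difficulty. Writing $A$ type-II as $P_mA$ type-I and using that $P_m$ is symmetric, the type-I result applied to $P_mA$ gives that $(P_mA)^T=A^TP_m$ is type-I. To conclude that $A^T$ is type-II I must instead exhibit $P_nA^T$ as type-I, and for rectangular matrices these two are not interchangeable: the type-I property I readily obtain, that of $A^TP_m$, only guarantees $a_{m+1-i,i}\neq0$, whereas the type-II property I want for $A^T$, namely that of $P_nA^T$, requires $a_{i,n+1-i}\neq0$, a different anti-diagonal as soon as $m\neq n$. Reconciling the two anti-diagonals is exactly where the maximal-rank hypothesis $\mathrm{rank}(A)=\min\{m,n\}$ must enter in an essential way; without it one can build degenerate type-II staircases (for instance with a zero top row) whose transpose is neither type-I nor type-II. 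I would therefore handle this step not by the naive "rotate by $P_m$ and $P_n$'' argument, which fails here, but by careful bookkeeping of the zero pattern $I,J,\widehat I,\widehat J$ and the nonvanishing of the consecutive $Q^0$-minors supplied by Theorem \ref{TeoremaMenores}, forcing the required anti-diagonal entries of $A^T$ to be nonzero and the shadow conditions to hold. This is the main obstacle.

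Once (i) and (ii) are secured in both cases, the conclusion is routine: the nontrivial minors of $A^T$ are in sign-preserving bijection with those of $A$ through $\det A^T[\beta|\alpha]=\det A[\alpha|\beta]$, so $\varepsilon_k\det A^T[\beta|\alpha]>0$ for each of them; since $\mathrm{rank}(A^T)=r$ and $A^T$ is type-I or type-II, Definition \ref{defASSR} yields that $A^T$ is ASSR with signature $\varepsilon$. Applying the same reasoning with $A^T$ in place of $A$ gives the reverse implication and completes the proof.
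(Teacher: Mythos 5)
Your type-I half is correct and is essentially the paper's own argument: transposition preserves the type-I staircase pattern, puts the nontrivial submatrices of $A$ and $A^T$ in bijection via $A[\alpha|\beta]\mapsto A^T[\beta|\alpha]$, and the identity $\det A[\alpha|\beta]=\det A^T[\beta|\alpha]$ preserves signs, so the signature is unchanged; the reduction to one implication via $(A^T)^T=A$ is also fine.

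The gap is the type-II case, which you explicitly leave unproven: a sketch of "careful bookkeeping of the zero pattern'' is a plan, not a proof, and the proposal is incomplete exactly where you yourself locate the difficulty. Worse, the rescue you propose cannot succeed, because the maximal-rank hypothesis does \emph{not} force the needed anti-diagonal of $A^T$ to be nonzero. Take $A=\left(\begin{smallmatrix}0&0\\1&1\\1&2\end{smallmatrix}\right)$: it has rank $2=\min\{3,2\}$; $P_3A=\left(\begin{smallmatrix}1&2\\1&1\\0&0\end{smallmatrix}\right)$ is type-I staircase, so $A$ is type-II; its nontrivial minors are the positive entries and $\det A[2,3|1,2]=1$, so $A$ is ASSR with $\varepsilon=(1,1)$. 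Yet $A^T$ has $(1,1)$ entry $0$, and so does $P_2A^T$, so $A^T$ is neither type-I nor type-II staircase and hence not ASSR under Definition \ref{defASSR}. Thus your expectation that "the maximal-rank hypothesis must enter in an essential way'' to force the anti-diagonal entries is refuted: the degenerate zero-top-row matrices you mention exist \emph{with} full rank, and no bookkeeping with $I,J,\widehat I,\widehat J$ or Theorem \ref{TeoremaMenores} can close the case, since the forward implication is simply false for rectangular type-II matrices under the paper's definitions. It is worth saying that your diagnosis of the anti-diagonal mismatch is sharper than the paper itself, whose entire treatment of the type-II case is the one-liner "it is sufficient to note that $P_mA$ is type-I staircase'' --- precisely the naive reduction you correctly reject; that argument is valid when $m=n$ (a $180^\circ$ rotation $P_mBP_n$ of a square type-I matrix is again type-I) but breaks for $m>n$, as the example above shows. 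So the honest conclusion is not to finish your third paragraph but to restrict the proposition (e.g.\ to type-I staircase or square matrices) or to modify the rectangular type-II definition.
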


\begin{proof}
Let us suppose that $A$ is a type-I staircase matrix. Note that  {for} any nontrivial submatrix of $A$ ($A[\alpha|\beta]$, with $1 \leq s \leq r$,  $\alpha\in Q_{s,m}$, $\beta \in Q_{s,n}$),  {its diagonal entries are} nonzero, and such entries are the same for the submatrix  $(A[\beta|\alpha])^T$. This submatrix is a submatrix of $A^T$ obtained as $A^T[\beta | \alpha]$. In addition, the nontrivial submatrices of $A^T$ have  associated nontrivial submatrices in $A$. Thus, since $\det A[\alpha|\beta]=\det A^T[\beta | \alpha]$, the two matrices are ASSR and have the same signatures.

In the case that $A$ is a type-II staircase matrix, it is sufficient to note that $P_mA$ is type-I staircase to obtain the result.
\end{proof}

\section{$QR$ factorization of ASSR matrices}

In this section we are going to study the $QR$ factorization of a rectangular matrix with linearly independent columns (see \cite{Golub2013}).

\begin{defi}\label{facQR}
Let $A=\left(a_{ij}\right)_{1 \leq i \leq m, 1 \leq j \leq n}$ be an $m\times n$ real matrix with $m \geq n$ and $\hbox{rank}(A)=n$. A $QR$ factorization of $A$ is a decomposition of the form $A=QR$, where $Q$ is an $m\times n$ real matrix verifying $Q^TQ=I_n$ and $R$ is an $n \times n$ upper triangular matrix.
\end{defi}

This type of factorization for matrices with linearly independent columns is always possible, as can be seen in the following result that appears in various Linear Algebra texts (see, for instance \cite{Strang2006})

\begin{thm}\label{existeQR}
Let $A$ be an $m\times n$ matrix with linearly independent columns. Then $A$ can be factored as $A=QR$, where $Q$ is an $m\times n$ matrix with orthonormal columns and $R$ is an $n \times n$ invertible upper triangular matrix with positive diagonal entries. If $m=n$ then $Q$ and $R$ are square matrices and $Q$ is an orthogonal matrix.
\end{thm}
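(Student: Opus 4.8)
The plan is to construct the factorization explicitly by applying the Gram--Schmidt orthogonalization process to the columns of $A$. Write $A=[a_1\mid a_2\mid\cdots\mid a_n]$; since $\mathrm{rank}(A)=n$ these columns are linearly independent, and this is precisely the hypothesis that allows the process to run to completion. I would define inductively, for $j=1,\dots,n$,
$$
v_j=a_j-\sum_{i=1}^{j-1}\left(q_i^T a_j\right)q_i,\qquad q_j=\frac{v_j}{\|v_j\|},
$$
where the sum is empty for $j=1$, so that $v_1=a_1$.

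The one substantive point---and the step where linear independence is genuinely used---is to verify that $v_j\neq 0$ at every stage, so that the normalization defining $q_j$ is legitimate. If some $v_j$ vanished, then $a_j$ would lie in $\mathrm{span}\{q_1,\dots,q_{j-1}\}=\mathrm{span}\{a_1,\dots,a_{j-1}\}$, contradicting the linear independence of the columns. Granting this, an easy induction shows that $q_1,\dots,q_n$ form an orthonormal system.

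Next I would read off $Q$ and $R$. Rearranging the defining relation gives
$$
a_j=\|v_j\|\,q_j+\sum_{i=1}^{j-1}\left(q_i^T a_j\right)q_i=\sum_{i=1}^{j}r_{ij}\,q_i,
$$
with $r_{jj}=\|v_j\|>0$, $r_{ij}=q_i^T a_j$ for $i<j$, and $r_{ij}=0$ for $i>j$. Setting $Q=[q_1\mid\cdots\mid q_n]$ and $R=(r_{ij})_{1\le i,j\le n}$, which is upper triangular, we obtain $A=QR$. Orthonormality of the columns of $Q$ is exactly $Q^TQ=I_n$, and since $R$ is upper triangular with $\det R=\prod_{j=1}^{n}r_{jj}=\prod_{j=1}^{n}\|v_j\|>0$, it is invertible with positive diagonal entries.

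Finally, in the square case $m=n$ the matrix $Q$ is $n\times n$ with orthonormal columns; combining $Q^TQ=I_n$ with the fact that $Q$ is square forces $QQ^T=I_n$ as well, so $Q$ is orthogonal and $R$ is a genuine square upper triangular factor. Since this is a classical result, there is no deep obstacle; the only delicate step is the nonvanishing of each $v_j$ noted above. Alternatively, one could obtain $R$ as the Cholesky factor of the symmetric positive definite matrix $A^TA$ and then set $Q=AR^{-1}$, verifying $Q^TQ=R^{-T}(A^TA)R^{-1}=I_n$; this avoids Gram--Schmidt at the cost of invoking the existence of the Cholesky factorization.
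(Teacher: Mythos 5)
Your proof is correct and is essentially the argument the paper relies on: the paper states this theorem without proof, citing standard texts, and your Gram--Schmidt construction (with the key observation that linear independence guarantees $v_j\neq 0$, so $r_{jj}=\|v_j\|>0$) is exactly the classical argument, which the paper itself invokes later when it notes in the proof of Proposition~\ref{lematipo-I} that Gram--Schmidt yields $R$ with strictly positive diagonal. No gaps; the alternative Cholesky route you sketch is also valid but unnecessary here.
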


The following result gives us important information about the upper triangular matrix $R$ of the factorization $QR$ of a SR matrix with maximum rank.

\begin{thm}\label{RTP}
Let $A$ be an $m\times n$ real matrix with $m \geq n$, $\hbox{rank}(A)=n$ and SR with signature  $\varepsilon$. Then $A$ can be factored as $A=QR$ where $Q$ is an $m\times n$ matrix verifying that $Q^TQ=I_n$ and $R$ an $n \times n$ nonsingular upper triangular TP matrix.
\end{thm}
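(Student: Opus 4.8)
The plan is to get the factorization itself for free from Theorem \ref{existeQR} and then identify $R$ with the Cholesky factor of the Gram matrix $G:=A^TA$, whose total nonnegativity will follow from the sign regularity of $A$. First I would apply Theorem \ref{existeQR}: since $A$ has full column rank $n$, it factors as $A=QR$ with $Q^TQ=I_n$ and $R$ upper triangular, nonsingular, with positive diagonal entries, so only the total positivity of $R$ remains. The key observation is that
$$A^TA=R^TQ^TQR=R^TR,$$
so $R$ is exactly the (unique) Cholesky factor of $G:=A^TA$, which is symmetric positive definite because $x^TGx=\|Ax\|^2>0$ for $x\neq 0$, the columns of $A$ being independent.

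Next I would prove that $G$ is totally nonnegative. For $\beta,\gamma\in Q_{k,n}$ with $k\leq n$, the submatrix $G[\beta|\gamma]$ is the product of $A$ restricted to columns $\beta$ (transposed) and $A$ restricted to columns $\gamma$, so the Cauchy--Binet formula yields
$$\det G[\beta|\gamma]=\sum_{\alpha\in Q_{k,m}}\det A[\alpha|\beta]\,\det A[\alpha|\gamma].$$
Because $A$ is SR with signature $\varepsilon$, both factors $\det A[\alpha|\beta]$ and $\det A[\alpha|\gamma]$ carry the same sign $\varepsilon_k$ (or vanish), whence every summand equals $(\varepsilon_k\det A[\alpha|\beta])(\varepsilon_k\det A[\alpha|\gamma])\geq 0$. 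Therefore $\det G[\beta|\gamma]\geq 0$ for all $\beta,\gamma$ and all $k$, i.e. $G$ is totally nonnegative (and nonsingular).

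Finally I would transfer this property to the Cholesky factor. A nonsingular totally nonnegative matrix admits a unique factorization into a lower triangular and an upper triangular totally nonnegative factor (see \cite{Ando1987}). Writing $R=DU'$ with $D$ its positive diagonal and $U'$ unit upper triangular, the Cholesky identity becomes $G=(U')^TD^2U'$, which is precisely such a triangular factorization of $G$, with unit lower factor $(U')^T$ and upper factor $D^2U'$. By uniqueness both factors are totally nonnegative; since $(U')^T$ totally nonnegative forces $U'$ totally nonnegative, and scaling the rows by the positive numbers in $D$ preserves total nonnegativity, $R=DU'$ is totally nonnegative. As $R$ is upper triangular with positive diagonal, its zeros propagate to the corner (the alternative propagation would annihilate a positive diagonal entry), so $R$ is type-I staircase; being nonsingular of order $n$ with all minors nonnegative, $R$ is SR with signature $(1,\dots,1)$, i.e. TP.

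The main obstacle is this last step, namely inheriting total nonnegativity by the Cholesky factor $R$ from that of $G$. It rests on the classical but nontrivial uniqueness of the totally nonnegative triangular factorization of a nonsingular totally nonnegative matrix; I would either cite it from \cite{Ando1987} or, to keep the argument self-contained, reprove the needed uniqueness by comparing the $LU$ and Cholesky factorizations of $G$ and using the determinantal formulas expressing the minors of the triangular factors through the minors of $G$.
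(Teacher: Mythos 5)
Your proposal is correct and follows essentially the same route as the paper's own proof: obtain $A=QR$ from Theorem \ref{existeQR}, note $A^TA=R^TR$ is totally nonnegative (where the paper cites Theorem 3.1 of \cite{Ando1987}, you reprove the needed special case directly via Cauchy--Binet), and transfer total nonnegativity to $R=DU$ via the uniqueness of the triangular $LDU$ factorization of the nonsingular totally nonnegative matrix $R^TR$ (the paper cites Theorem 5.4 of \cite{Gasca1996TP}, you cite \cite{Ando1987}). Your final verification that $R$ is type-I staircase is left implicit in the paper's proof, and is a sound extra step given that the paper's Definitions \ref{defSR} and \ref{TP} build the staircase requirement into the notion of TP.
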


\begin{proof}
Since $A$ is such that $\hbox{rank}(A)=n$, Theorem \ref{existeQR} allows us to ensure that the factorization $A=QR$ exists in the terms established in Definition \ref{facQR}.

In addition, $A^TA=R^TQ^TQR=R^TR$, and since $A$ is an SR matrix with signature $\varepsilon$  {and so is its transpose ($A^T$) and, consequently,} with the same signature, and by Theorem 3.1  {of} \cite{Ando1987} we can assure that $A^T A$ is also SR with signature $\varepsilon \varepsilon=\varepsilon^2=(1,1,\dots, 1)$. Thus, $R^TR$ is an SR matrix with signature $\varepsilon=(1,1,\dots, 1)$ and therefore TP.

On the other hand, since $R=(r_{ij})_{1 \leq i,j \leq n}$ is an upper triangular matrix with positive diagonal elements ($r_{ii} >0$, for all $i$), this matrix can be factored as $R=DU$, with $D=(d_{ij})_{1 \leq i,j \leq n}$ a diagonal matrix with positive diagonal elements, and $U=(u_{ij})_{1 \leq i,j \leq n}$ an upper triangular with $u_{ii}=1$ for $1 \leq i \leq n$, and thus $R^TR=U^TD^TDU=L \bar{D} U$.

Now, considering that $R^TR$ is TP, it is possible to apply the Theorem 5.4 of \cite{Gasca1996TP}, so that it can be decomposed in the form $L \bar{D} U$ with $L=U^T$ and $\bar D=D^TD$, being this the unique factorization $LDU$ of the nonsingular matrix $R^T R$. Besides, $\bar{D}$ is a diagonal matrix with positive diagonal elements, $L$ is a lower triangular TP with unit diagonal elements and $U$ is an upper triangular TP with $u_{ii}=1$for $1 \leq i \leq n$. Thus, we can conclude that $R=DU$ is TP.
\end{proof}

\begin{ejem}
Let $A$ be the  {following} $4\times 3$ matrix with $\hbox{rank}(A)=3$ and SR with signature $\varepsilon=(-1,1,-1)$
$$
A=\left(
    \begin{array}{ccc}
    -3 & -1 & 0  \\
    -2 & -5 & -2  \\
    -10^{-6} -1 & -1 & -1  \\
    0  & 0 & -1
    \end{array}
  \right).
$$
Using the following function of \texttt{MatLAb}

\begin{verbatim}
function [Q,R] =  mgs(A)
    % Modified Gram-Schmidt.  [Q,R] = mgs(A);
    % G.W. Stewart, "Matrix Algorithms, Volume 1", SIAM, 1998.
    [m,n] = size(A);
    Q = zeros(m,n);
    R = zeros(n,n);
    for k = 1:n
        Q(:,k) = A(:,k);
        for i = 1:k-1
            R(i,k) = Q(:,i)'*Q(:,k);
            Q(:,k) = Q(:,k) - R(i,k)*Q(:,i);
        end
        R(k,k) = norm(Q(:,k))';
        Q(:,k) = Q(:,k)/R(k,k);
    end
end
\end{verbatim}
the following results have been obtained
$$
Q=\left(
    \begin{array}{ccc}
-0.832050294337812  &  0.534522583680626  & -0.068278727592371 \\
-0.554700196225208  & -0.801783741890443  &  0.102418313294977 \\
-0.000000277350098  & -0.267260993741225  & -0.443812838882511 \\
                  0 &                   0 & -0.887625297354253
  \end{array}
\right),
$$
$$
R=\left(
    \begin{array}{ccc}
3.605551275464128 &  3.605551552813948 &  1.109400669800514 \\
 0  & 3.741657119512813 &  1.870828477522111 \\
 0  &      0  & 1.126601509646810
  \end{array}
\right),
$$
which allows us to verify that $Q$ is a matrix $4\times 3$ that fulfills $Q^TQ=I_3$ and $R$ is a nonsingular upper triangular matrix $3 \times 3$ TP.
\end{ejem}

 {
\begin{rmk} Note, that modified Gram-Schmidt algorithm (mgs) is not just a simple modification of classical Gram-Schmidt algorithm (see \cite{Stewart1998}). The idea is to orthogonalize against the emerging set of vectors instead of against the original set. There are two variants, a column-oriented one and a row-oriented one. They produce the same results, in different order. In this case, we have considered the column version.
\end{rmk}
}

Let us now see that the rows and columns involved in the column boundary minors of a type-I staircase matrix $A$ correspond to column generalized boundary minors in $Q$.

\begin{propo}\label{lematipo-I}
Let $A=\left(a_{ij}\right)_{1 \leq i \leq m, 1 \leq j \leq n}$ be  {an} $m\times n$ real matrix with $m \geq n$ and $\hbox{rank}(A)=n$. Let $A=QR$ be a $QR$ factorization of $A$, such that the main diagonal of $R$ is strictly positive. Then, for $k\leq n$, if $\det A[\alpha | \beta]$ with $\alpha \in Q^0_{k,m}$ and $\beta \in Q^0_{k,n}$ is a column boundary minor of $A$, then $\det Q[\alpha |\beta]$ is a column generalized boundary minor of $Q$.
\end{propo}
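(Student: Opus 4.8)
The plan is to verify directly the two conditions in the definition of a column generalized boundary submatrix for $Q[\alpha|\beta]$: that its leading diagonal entry $q_{\alpha_1,\beta_1}$ is nonzero, and that, when $\beta_1>1$, the block $Q[\alpha|\beta_1-1]$ vanishes. The whole argument rests on a single structural observation: since $R$ is upper triangular with strictly positive diagonal, for every $j\le n$ only the first $j$ columns of $Q$ enter the first $j$ columns of $A$, so that
$$
A[1,\dots,m\,|\,1,\dots,j]=Q[1,\dots,m\,|\,1,\dots,j]\,R[1,\dots,j\,|\,1,\dots,j],
$$
where the leading principal block $R[1,\dots,j\,|\,1,\dots,j]$ is an invertible upper triangular matrix. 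Inverting it expresses the first $j$ columns of $Q$ in terms of the first $j$ columns of $A$.

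First I would settle the zero-block condition. If $\beta_1=1$ it holds vacuously. If $\beta_1>1$, the hypothesis that $A[\alpha|\beta]$ is a column boundary submatrix gives $A[\alpha|\beta_1-1]=0$; restricting the relation above with $j=\beta_1-1$ to the rows $\alpha$ and multiplying on the right by $(R[1,\dots,\beta_1-1\,|\,1,\dots,\beta_1-1])^{-1}$ yields $Q[\alpha|\beta_1-1]=A[\alpha|\beta_1-1]\,(R[\dots])^{-1}=0$, which is exactly the boundary condition required of $Q$.

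Next I would establish that $q_{\alpha_1,\beta_1}\neq0$. Expanding the $(\alpha_1,\beta_1)$ entry of $A=QR$ and using that $R$ is upper triangular gives
$$
a_{\alpha_1,\beta_1}=\sum_{l=1}^{\beta_1}q_{\alpha_1,l}\,r_{l,\beta_1}.
$$
By the previous step (or trivially when $\beta_1=1$) we have $q_{\alpha_1,l}=0$ for every $l<\beta_1$, so the sum collapses to $a_{\alpha_1,\beta_1}=q_{\alpha_1,\beta_1}\,r_{\beta_1,\beta_1}$. Since $A[\alpha|\beta]$ is a boundary submatrix we have $a_{\alpha_1,\beta_1}\neq0$, and $r_{\beta_1,\beta_1}>0$; hence $q_{\alpha_1,\beta_1}=a_{\alpha_1,\beta_1}/r_{\beta_1,\beta_1}\neq0$. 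Together with the zero-block condition this shows $Q[\alpha|\beta]$ is a column generalized boundary submatrix, so $\det Q[\alpha|\beta]$ is the corresponding generalized boundary minor.

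The only delicate point is conceptual rather than computational: the triangular relation ties together leading columns of $A$ and $Q$, so it controls what happens to the left of column $\beta_1$ and, through the collapse of the expansion of $a_{\alpha_1,\beta_1}$, pins down the single entry $q_{\alpha_1,\beta_1}$; it does not let me conclude that the remaining diagonal entries $q_{\alpha_i,\beta_i}$ with $i\ge2$ are nonzero. This is precisely why the conclusion is a generalized boundary minor rather than a genuine boundary minor, and I would flag that the weakening from ``boundary'' to ``generalized boundary'' is forced by the structure and not an artifact of the argument.
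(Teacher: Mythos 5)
Your strategy is essentially the paper's Gram--Schmidt argument repackaged in matrix form: the identity $A[1,\dots,m|1,\dots,j]=Q[1,\dots,m|1,\dots,j]\,R[1,\dots,j|1,\dots,j]$ is exactly the statement, extracted entrywise in the paper via the Gram--Schmidt recursion (\ref{defHj}), that each column of $Q$ is a linear combination of the leading columns of $A$. However, there is a genuine gap in your zero-block step, and it propagates into your determination of $q_{\alpha_1,\beta_1}$. In Definition \ref{sub-frontera}, the condition $A[\alpha|\beta_1-1]=0$ refers to the \emph{single} column $\beta_1-1$ restricted to the rows $\alpha$ (a $k\times 1$ block), not to the block of all columns $1,\dots,\beta_1-1$. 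Your equation $Q[\alpha|\beta_1-1]=A[\alpha|\beta_1-1]\bigl(R[1,\dots,\beta_1-1\,|\,1,\dots,\beta_1-1]\bigr)^{-1}$ only makes dimensional sense if $A[\alpha|\beta_1-1]$ denotes the full $k\times(\beta_1-1)$ block $A[\alpha|1,\dots,\beta_1-1]$, and the vanishing of that \emph{full} block is what your inversion argument actually needs: column $\beta_1-1$ of $Q$ restricted to rows $\alpha$ equals $A[\alpha|1,\dots,\beta_1-1]$ times the last column of the inverse triangular factor, whose entries are in general all nonzero, so the single zero column of $A$ gives nothing by itself. The same issue hits the collapse of $a_{\alpha_1,\beta_1}=\sum_{l=1}^{\beta_1}q_{\alpha_1,l}\,r_{l,\beta_1}$: you need $q_{\alpha_1,l}=0$ for \emph{all} $l<\beta_1$, not just $l=\beta_1-1$.

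The missing ingredient is the type-I staircase hypothesis, which the proposition's statement leaves implicit (column boundary minors are only defined for staircase matrices, Definition \ref{sub-frontera}) but which the paper's proof invokes explicitly. Since $a_{\alpha_1,\beta_1}\neq 0$ and $a_{\alpha_1,\beta_1-1}=0$, conditions (\ref{diagtipoI}) and (\ref{arribatipoI}) of Definition \ref{tipoI} force $\alpha_1\geq\beta_1$ (if $\alpha_1<\beta_1-1$ the zero would shadow up and to the right, killing $a_{\alpha_1,\beta_1}$; and $\alpha_1=\beta_1-1$ would contradict $a_{ii}\neq 0$). Hence every $\alpha_i>\beta_1-1$, and condition (\ref{abajotipoI}) propagates each zero $a_{\alpha_i,\beta_1-1}=0$ leftwards, giving $a_{\alpha_i,l}=0$ for all $l\leq\beta_1-1$, i.e.\ the full block $A[\alpha|1,\dots,\beta_1-1]$ vanishes. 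With that paragraph inserted, your argument closes and is in fact a tidy matrix-level version of the paper's proof, which establishes the same two facts entrywise: $h_{ij}$ vanishes when row $i$ of $A$ vanishes through column $j$ (their (\ref{qij=0})), and $h_{ij}=a_{ij}\neq 0$ when it vanishes through column $j-1$ only (their (\ref{qijnot=0})). Your closing observation --- that nothing forces $q_{\alpha_i,\beta_i}\neq 0$ for $i\geq 2$, so only a \emph{generalized} boundary minor can be claimed for $Q$ --- is correct and matches the paper's conclusion.
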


\begin{proof}
Since $A$ verifies the conditions of Theorem \ref{existeQR}, we can assure that there is a factorization $QR$. Besides, if we perform the factorization using the Gram-Schmidt method, it is ensured that the entries of the main diagonal of $R$ are strictly positive.  {By} applying this method, for each $A[1,2,\dots,m|j]$, $j$-th  column of the matrix $A$, the unique values $x_1,x_2,\dots , x_{j-1}\in \mathbb{R}$ are calculated with
\begin{eqnarray}\label{defHj}
H[1,\dots,m|j]& = & A[1,\dots,m|j]-x_1A[1,\dots,m|1]-x_2A[1,\dots,m|2]-\cdots   \nonumber \\
 &  & - x_{j-1}A[1,\dots,m|j-1]
\end{eqnarray}
such that $H[1,2,\dots,m|j]^T A[1,2,\dots,m|k]=0$, $\forall k\in \{1,\cdots , j-1\}$. As is known, if $A$ has rank $n$, the $n$ columns of $A$ form a free system, and since the column $j$ of $H$ is a linear combination of the first $j$ columns of $A$ with the coefficient of the column $j$  {being} nonzero, then $H[1,\dots,m|j]\not=0$, and as a result $\| H[1,\dots,m|j] \|\not=0$. Thus,  normalizing $H[1,\dots,m|j]$, we construct $Q[1,\dots,m|j]=\dfrac{H[1,\dots,m|j]}{\| H[1,\dots,m|j] \|}$, so the zero-nonzero entries of column $j$ of $H$ match those in the $j$ column of $Q$.

Since $A$ is a type-I staircase matrix, if $a_{ij}=0$ with $i > j$, then $a_{i,j-1}=0$, $a_{i,j-2}=0, \dots , a_{i,1}=0$.  {For} this reason and by (\ref{defHj}) we have that
\begin{equation}\label{qij=0}
h_{ij}=a_{ij}-x_1a_{i1}-x_2a_{i2}-\cdots - x_{j-1}a_{i,j-1}=0 \Rightarrow q_{ij}=0.
\end{equation}

Suppose now that $a_{i,j-1}=0$, $a_{i,j-2}=0, \dots , a_{i1}=0$ and $a_{ij}\not=0$, then
\begin{equation}\label{qijnot=0}
h_{ij}=a_{ij}-x_1a_{i1}-x_2a_{i2}-\cdots - x_{j-1}a_{i,j-1}=a_{ij} \Rightarrow q_{ij}\not=0.
\end{equation}
So, it is evident that if $\det A[\alpha | \beta]$ is a column boundary minor of $A$ we have either $\beta_1=1$, and so $\det Q[\alpha|\beta]$ is a column generalized boundary minor of $Q$, or, since the matrix is type-I staircase, $A[\alpha|\beta_1-1]=0$, which would imply $ Q[\alpha|\beta_1-1]=0$ and, at least $a_{\alpha_1,\beta_1}\not=0$, so that $q_{\alpha_1,\beta_1}\not=0$.
\end{proof}

\begin{ejem}\label{ejQR}
Let $A$ be the following matrix  {which allows for} a $QR$ factorization
  $$
  A=\left(\begin{array}{ccccc} -1 & -4 & 0 & 0 & 0\\ -2 & -10 & -10 & -16 & -2\\ 0 & -6 & -33 & -60 & -21\\ 0 & -8 & -46 & -92 & -70\\ 0 & 0 & -9 & -60 & -242\\ 0 & 0 & -6 & -60 & -443 \end{array}\right)=QR
  $$
with
  $$
  Q=\left(
      \begin{array}{rrrrr}
        -0.4472  &    0.0797 &     -0.0481 &     0.0987 &    -0.7699 \\
        -0.8944  &    -0.0398 &     0.0240 &    -0.0493 &     0.3849 \\
              0  &    -0.5976 &     0.0852 &    -0.0887 &     0.3285 \\
              0  &    -0.7968 &    -0.0699 &     0.0789 &    -0.3426 \\
              0  &          0 &    -0.8258 &     0.5313 &     0.1684 \\
              0  &          0 &    -0.5505 &    -0.8315 &    -0.0742 \\
      \end{array}
    \right),
  $$
  $$
  R=\left(
      \begin{array}{ccccc}
      2.2361       &    10.7331 &     8.9443 &    14.3108 &     1.7889 \\
              0    &    10.0399 &    56.7734 &   109.8017 &    68.4069 \\
              0    &          0 &    10.8989 &    83.5122 &   446.7713 \\
              0    &          0 &          0 &    16.8671 &   236.2195 \\
              0    &          0 &          0 &          0 &     8.4289 \\
      \end{array}
    \right).
  $$
Note that the column boundary minor of $A$  {corresponds} to the column generalized boundary minor of $Q$. The same does not occur with the row boundary minor, since, for example, $\det A[2,3|3,4]$ is a row boundary minor of $A$, because $A[1|3,4]=[0, 0]$, but $Q[1|3,4]=[-0.0481, 0.0987]\not= [0,0]$.
\end{ejem}

The following result analyzes the $QR$ factorization of ASSR matrices.

\begin{thm}
Let $A$ be an $m\times n$ real matrix with $m \geq n$ and $\hbox{rank}(A)=n$. Let $A$ be a type-I staircase,  ASSR with signature $\varepsilon =(\varepsilon_1,\varepsilon_2 ,\dots, \varepsilon_n)$, and $A=QR$ a $QR$ factorization of $A$ with $R$ an upper triangular matrix with  {its} main diagonal strictly positive. Then, $R$ is a TP matrix and, furthermore, if $\det A[\alpha |\beta]$ is a column boundary minor of $A$, then  $\det Q[\alpha |\beta]$ is a column generalized boundary minor of $Q$ and $\varepsilon _k \det Q[\alpha |\beta]>0$.
\end{thm}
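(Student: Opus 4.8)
The plan is to assemble the final theorem from three earlier results, layering them in the right order. First I would establish that $R$ is TP. Since $A$ is type-I staircase, ASSR with signature $\varepsilon$, it is in particular SR with that signature, so Theorem \ref{RTP} applies directly: the $QR$ factorization of $A$ yields an $n \times n$ nonsingular upper triangular TP matrix $R$. This part requires no new work beyond invoking the already-proven structural result for SR matrices.

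Next I would handle the boundary-minor correspondence. Because $A$ is type-I staircase with $\hbox{rank}(A)=n$ and $A=QR$ with strictly positive diagonal of $R$, Proposition \ref{lematipo-I} applies verbatim: whenever $\det A[\alpha|\beta]$ with $\alpha \in Q^0_{k,m}$, $\beta \in Q^0_{k,n}$ is a column boundary minor of $A$, then $\det Q[\alpha|\beta]$ is a column generalized boundary minor of $Q$. So the geometric/combinatorial half of the conclusion is immediate from the proposition.

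The substantive remaining step is the sign condition $\varepsilon_k \det Q[\alpha|\beta]>0$. My approach here is to use $A=QR$ to relate the minor of $A$ to the minor of $Q$. Write $A[\alpha|\beta] = Q[\alpha | \{1,\dots,n\}] \cdot R[\{1,\dots,n\}|\beta]$ and apply the Cauchy-Binet formula. Because $R$ is upper triangular with strictly positive diagonal and $\beta \in Q^0_{k,n}$ is a consecutive set, the structure of $R$ should force the Cauchy-Binet sum to collapse so that $\det A[\alpha|\beta]$ equals $\det Q[\alpha|\gamma]$ times a positive product of the relevant diagonal entries $r_{ii}$ for the appropriate column index set $\gamma$; the column-boundary hypothesis ($\beta_1 = j_k$ with the preceding columns vanishing, i.e. $A[\alpha|\beta_1-1]=0$) is exactly what ensures that the only surviving term is the one with $\gamma=\beta$. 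Since $A$ is ASSR and $\det A[\alpha|\beta]$ is a nontrivial column boundary minor, Theorem \ref{TeoremaMenores} gives $\varepsilon_k \det A[\alpha|\beta]>0$; dividing through by the positive diagonal product then yields $\varepsilon_k \det Q[\alpha|\beta]>0$.

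The main obstacle I anticipate is making the Cauchy-Binet collapse rigorous, that is, showing precisely which index sets $\gamma$ survive and that the surviving contribution is a single term with a strictly positive coefficient. One must verify that the vanishing of $A[\alpha|\beta_1-1]$ together with the triangularity of $R$ eliminates every cross term with $\gamma \ne \beta$, and that no cancellation occurs among surviving terms. I would argue this by examining the zero pattern of $Q$ established in the proof of Proposition \ref{lematipo-I} (equations (\ref{qij=0}) and (\ref{qijnot=0})), which transfers the staircase structure of $A$ to $Q$, so that $Q[\alpha|\beta_1-1]=0$ and hence terms indexed by smaller column sets contribute zero. Once that reduction is justified, the positivity is forced and the sign conclusion follows. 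For the type-I case this completes the argument; the type-II case would be deferred to the permutation trick $P_m A$ used throughout the paper.
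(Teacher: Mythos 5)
Your proposal is correct and follows essentially the same route as the paper: invoke Theorem \ref{RTP} for the total positivity of $R$, invoke Proposition \ref{lematipo-I} for the boundary-minor correspondence, and collapse the Cauchy--Binet expansion of $\det A[\alpha|\beta]$ to the single term $\det Q[\alpha|\beta]\det R[\beta|\beta]$ by combining the upper triangularity of $R$ (killing terms with $\omega_i>\beta_i$) with the zero pattern transferred from $A$ to $Q$ via (\ref{qij=0}) (killing terms with $\omega_i<\beta_i$, column by column). The only cosmetic difference is that the paper obtains $\varepsilon_k\det A[\alpha|\beta]>0$ directly from Definition \ref{defASSR} (a column boundary minor is nontrivial by construction) rather than from Theorem \ref{TeoremaMenores}, and no type-II case is needed since the statement assumes $A$ is type-I staircase.
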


\begin{proof}
First, it should be noted that if $A$ is an ASSR matrix, then it is SR and therefore it satisfies the conditions of Theorem \ref{RTP}. So, we can affirm that $R$ is a nonsingular TP matrix.

On the other hand, if $\det A[\alpha | \beta]$ is a column boundary minor of $A$, the elements of its diagonal are nonzero (by Definition \ref{sub-frontera}), and therefore the minor is nontrivial. In addition, as $A$  {is} ASSR, the minor is nonzero.

By the Cauchy-Binet formula (cf. equation (1.23) of \cite{Ando1987}) we have
\begin{equation}\label{c-b}
\det A[\alpha | \beta]=\sum_{\omega \in Q_{k,n}} \det Q[\alpha | \omega ] \det R[\omega | \beta].
\end{equation}

If $\beta_1=1$ then $\beta =(1,2,\dots,k)$, and since $R$ is an upper triangular matrix, $\det R[\omega | \beta]=0$ if $\omega \not= \beta$ so that
$$
\underbrace{\det A[\alpha | \beta]}_{\not= 0}= \det Q[\alpha | \beta ] \underbrace{\det R[\beta | \beta]}_{\not= 0} \Rightarrow \det Q[\alpha | \beta ]\not= 0
$$
Let us suppose now that $\beta_1 >1$. Then, by (\ref{c-b}) and considering that $R$ is an upper triangular matrix we have that $\det R[\omega | \beta]=0$ if any $\omega_i > \beta_i$.

Suppose that $\omega_1 < \beta_1 \leq \alpha_1$. Since $\det A[\alpha | \beta]$ is a column boundary minor of $A$ with $\beta_1>1$ then it holds that $ A[\alpha|\beta_1-1]=0$.
By Proposition \ref{lematipo-I} $\det Q[\alpha |\beta]$ is a column generalized boundary minor of $Q$, and so $Q[\alpha|\beta_1-1]=0$. Then, given that $a_{\alpha_1,\beta_1-1}=0$ and $A$ is type-I staircase, we have that $a_{\alpha_i,\omega_1}=0$, for all $i$, hence $q_{\alpha_i,\omega_1}=0$  and $\det Q[\alpha | \omega ]=0$, therefore the only nonzero term of (\ref{c-b}) corresponds to $\omega$ with $\omega_1=\beta_1$.

Next, we suppose that $\omega_2<\beta_2 \leq \alpha_2$. By Definition \ref{sub-frontera} it is fulfilled that $\alpha\in Q^{0}_{k,m}$ and $\beta \in Q^{0}_{k,n}$, from which $\alpha_i=\alpha_1+i-1$ y $\beta_i=\beta_1+i-1$ and thus $\omega_2<\beta_2=\beta_1+1\leq\alpha_2 = \alpha_1+1$. So, $\omega_2-1 < \beta_1-1$, and since $A[\alpha|\beta_1-1]=0$, we have $a_{\alpha_2,\omega_2-1}=0$. In addition, since $A$ is a type-I staircase matrix, we obtain that $a_{i,\omega_2-1}=0$ by each $i \geq \alpha_2$, and by (\ref{qij=0}) it is fulfilled that  $q_{i,\omega_2-1}=0$, so, $\det Q[\alpha_2,\dots, \alpha_k|\omega_2,\dots,\omega_k]=0$. Then
$$
\det Q[\alpha|\omega]=q_{\alpha_1,\omega_1}\det Q[\alpha_2,\dots, \alpha_k|\omega_2,\dots,\omega_k]=0.
$$
Hence the only nonzero term of (\ref{c-b}) is that associated with $\omega=(\beta_1,\beta_2,\omega_3,\dots \omega_k)$.

 {By} reasoning in the same way with the remaining elements of $\omega$ we have that the only nonzero term in the Cauchy-Binet formula is the one that corresponds to $\omega=\beta$, so
$$
\underbrace{\det A[\alpha | \beta]}_{\not= 0}= \det Q[\alpha | \beta ] \underbrace{\det R[\beta | \beta]}_{\not= 0} \Rightarrow \det Q[\alpha | \beta ]\not= 0.
$$
Besides, since $A$ is an ASSR matrix with signature $\varepsilon =(\varepsilon_1,\varepsilon_2 ,\dots, \varepsilon_n)$, by Definition \ref{def_assr_gen} we have that
$$
0<\varepsilon_k \det A[\alpha | \beta]=\varepsilon_k \det Q[\alpha | \beta ] \underbrace{\det R[\alpha | \alpha]}_{>0}.
$$
So, if $\det A[\alpha |\beta]$ is a column boundary minor of $A$, then $\det Q[\alpha |\beta]$ is a generalized column boundary minor of $Q$ of order $k$, and $\varepsilon _k \det Q[\alpha |\beta]>0$.
\end{proof}

\begin{ejem}
The condition given in the previous result on $Q$ does not ensure the sign regularity of $Q$.  {Considering} the matrices of Example \ref{ejQR}  {again}, we see that the elements of $Q$  {do not} have the same sign, and so the minors of order $1$ do not verify the condition.
\end{ejem}

\section*{Acknowledgements}
This work has been partially supported by the Spanish Research Grant PGC2018-096321-B-100, TIN2017-87600-P and MTM2017-90682-REDT.


\end{document}